\date{}
\newlength{\defbaselineskip}
\newcommand{\setlinespacing}[1]%
           {\setlength{\baselineskip}{#1 \defbaselineskip}}
\newcommand{\N}{{\mathbb{N}}}
\newcommand{\actaqed}{\hfill $\actabox$}
{\medskip\noindent \textit{Proof of #1. }}%
{\actaqed \medskip}
\def\C{{\mathcal C}}
\def \cM{\mathcal M}
\def\R{{\mathbb R}}
\def\bbC{\mathbb C}
\def \<{\langle}
\def\>{\rangle}
\def \ep{\epsilon}
\def \de{\delta}
\def\la{\lambda}
\def\ro{\varrho}
\def \sp{\operatorname{span}}
\def\bv{\mathbf v}
\def\bw{\mathbf w}
\def\bF{\mathbf F}
\newtheorem{Theorem}{Theorem}[section]
\newtheorem{Lemma}{Lemma}[section]
\newtheorem{Definition}{Definition}[section]
\newtheorem{Proposition}{Proposition}[section]
\newtheorem{Remark}{Remark}[section]
\newtheorem{Corollary}{Corollary}[section]
\numberwithin{equation}{section}
\newcommand{\be}{\begin{equation}}
\newcommand{\ee}{\end{equation}}
\begin{document}

\title{On sampling discretization in $L_2$}

\author{I. Limonova\thanks{Lomonosov Moscow State University, Moscow Center for Fundamental and Applied Mathematics,} \,  and V. Temlyakov\thanks{University of South Carolina, Steklov Institute of Mathematics, Lomonosov Moscow State University, and Moscow Center for Fundamental and Applied Mathematics.} } \maketitle

\begin{abstract}
	{We prove a sampling discretization theorem for the square norm of functions from a finite dimensional subspace satisfying Nikol'skii's inequality with an upper bound on the number of sampling points of the order of the dimension of the subspace.}
\end{abstract}

 {\it Keywords and phrases}: real and complex sampling discretization, submatrices of orthogonal matrices.

\section{Introduction}
\label{Int}

Let $\Omega$ be a   nonempty subset of $\R^d$ with  the probability measure $\mu$. By $L_q$, $1\le q< \infty$, norm we understand
$$
\|f\|_q:=\|f\|_{L_q(\Omega,\mu)} := \left(\int_\Omega |f|^qd\mu\right)^{1/q}.
$$
By discretization of the $L_q$-norm we understand a replacement of the measure $\mu$ by
a discrete measure $\mu_m$ with support on a set $\xi =\{\xi^j\}_{j=1}^m \subset \Omega$. This means that integration with respect to  measure $\mu$ is replaced by an appropriate cubature formula. Thus, integration is replaced by evaluation of a function $f$ at a
finite set $\xi$ of points. This method of discretization is called {\it sampling discretization}. Discretization is
an important step in making a continuous problem computationally feasible. The reader can find a corresponding discussion in a recent survey \cite{DPTT}. The first results in sampling discretization were obtained by Marcinkiewicz and
by Marcinkiewicz-Zygmund (see \cite{Z}) for discretization of the $L_q$-norms of the univariate trigonometric polynomials in 1930s. Therefore,  sampling discretization results are sometimes referred to as {\it Marcinkiewicz-type theorems} (see \cite{VT158}, \cite{VT159}, \cite{DPTT}). Recently,  substantial progress in sampling discretization has been made in \cite{VT158}, \cite{VT159}, \cite{VT168}, \cite{DPTT},  \cite{DPSTT1}, \cite{DPSTT2}, \cite{Kos}. 

Let us   comment  on the values of functions from $L_q$. We are interested in discretization of the $L_q$-norms, $1\le q\le \infty$, of elements of finite dimensional subspaces. By a function $f\in L_q(\Omega,\mu)$ we
understand a specific function (not an equivalency class), which is defined almost everywhere with respect to $\mu$ on $\Omega$. In other words, for $f\in L_q(\Omega,\mu)$ there exists a set $E(f)\subset \Omega$ such that $\mu(E(f))=0$ and $f(x)$ is defined for all $x\in \Omega \setminus E(f)$.
We say that a subspace $X_N\subset L_q(\Omega,\mu)$ is an $N$-dimensional subspace if there are $N$ linearly independent functions $u_i\in X_N$, $i=1,\dots,N$, such that
$X_N=\sp(u_1,\dots,u_N)$. In this case, for the subspace $X_N$ there exists a set $E(X_N)\subset \Omega$ such that $\mu(E(X_N))=0$ and each $f\in X_N$ is defined for all $x\in \Omega \setminus E(X_N)$. It will be convenient for us to assume that each $f\in X_N$ is defined for all $x\in \Omega$.

In this paper we present results on sampling discretization in the case $q=2$.  We consider two settings: (I) discretization with equal weights and (II) weighted discretization.  
In Section \ref{A} we prove the main technical results -- Lemma \ref{Lim} and its Corollary \ref{weighted} -- that are used in the proofs of Theorems \ref{IT4} and \ref{ITw}. We also discuss in detail known results related to the main lemma -- Lemma \ref{Lim}. In Section \ref{B} we prove and discuss Theorems \ref{IT4} and \ref{ITw}. 

We now proceed to a detailed discussion of our new results and related known results. First, in Subsection \ref{Ia} we formulate the main results of the paper and   comment 
on their novelty and impact. Second, in Subsection \ref{Ib} we present a brief history of discretization with equal weights, which is directly related to our Theorem \ref{IT4}. Finally, in Subsection \ref{Ic} we give a historical comment on weighted discretization.

\subsection{Main results}\label{Ia}

{\bf I. Equal weights.} The following condition is the key to the existence of good discretization with equal weights.

{\bf Condition E.} We say that an orthonormal system $\{u_i(x)\}_{i=1}^N$ defined on $\Omega$ satisfies Condition E with a constant $t>0$   if for all $x\in \Omega$
$$
 \sum_{i=1}^N |u_i(x)|^2 \le Nt^2.
$$
 Note that integration of the above inequality over $x\in\Omega$ gives  $t\geq 1$. 

The following Theorem \ref{IT4} solves (in the sense of order) the problem 
of discretization with equal weights for $N$-dimensional subspaces of $L_2(\Omega,\mu)$ satisfying 
Condition E. 

 \begin{Theorem}\label{IT4} Let  $\Omega\subset \R^d$ be a   nonempty set with  the probability measure $\mu$. Assume that $\{u_i(x)\}_{i=1}^N$ is a real (or complex) orthonormal system in $L_2(\Omega,\mu)$ satisfying Condition E. 
Then there is an absolute  constant $C_1$ such that there exists a set $\{\xi^j\}_{j=1}^m\subset \Omega$ of $m \le C_1 t^2 N$ points with the property:
 for any $f=\sum_{i=1}^N c_iu_i$  we have  
\begin{equation*}
C_2 \|f\|_2^2 \le \frac{1}{m}\sum_{j=1}^m |f(\xi^j)|^2 \le C_3 t^2\|f\|_2^2, 
\end{equation*}
where $C_2$ and $C_3$ are absolute positive constants. 
\end{Theorem}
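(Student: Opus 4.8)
The plan is to pass to the matrix formulation and then invoke the announced main selection lemma. Write $\phi(x):=(u_1(x),\dots,u_N(x))$, regarded as a column vector, so that for $f=\sum_{i=1}^N c_iu_i$ one has $f(x)=\langle c,\phi(x)\rangle$, $\|f\|_2^2=\|c\|_2^2$ by orthonormality, and $\frac1m\sum_{j=1}^m|f(\xi^j)|^2=c^{*}\bigl(\frac1m\sum_{j=1}^m\phi(\xi^j)\phi(\xi^j)^{*}\bigr)c$. Hence the theorem is equivalent to producing $m\le C_1t^2N$ points for which $C_2 I_N\preceq \frac1m\sum_{j=1}^m\phi(\xi^j)\phi(\xi^j)^{*}\preceq C_3t^2 I_N$. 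Here $\int_\Omega\phi\phi^{*}\,d\mu=I_N$ by orthonormality, and Condition E says exactly that $\|\phi(x)\|_2^2\le Nt^2$ for every $x\in\Omega$.

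First I would reduce to a finite set with equal weights. Since $I_N=\int_\Omega\phi\phi^{*}\,d\mu$ is the barycenter of a probability measure supported on the bounded set $\{\phi(x)\phi(x)^{*}:x\in\Omega\}$ inside the $N^2$-dimensional real space of Hermitian matrices, Carath\'eodory's theorem (or Tchakaloff's theorem) produces finitely many points and nonnegative weights summing to $1$ whose weighted average of the $\phi\phi^{*}$ is arbitrarily close to $I_N$; approximating those weights by fractions with a common denominator $L$ and repeating points accordingly, I obtain a multiset $Y=\{y_1,\dots,y_L\}\subset\Omega$ such that $B:=\frac1L\sum_{l=1}^L\phi(y_l)\phi(y_l)^{*}$ satisfies $\frac12 I_N\preceq B\preceq 2 I_N$, while still $\|\phi(y_l)\|_2^2\le Nt^2$. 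The bound from Condition E is kept for free, being pointwise on all of $\Omega$. The deviation of $B$ from $I_N$ will be absorbed by whitening, i.e. by applying the selection step to $\psi(y):=B^{-1/2}\phi(y)$ (which makes the equal-weight Gram sum exactly $I_N$, keeps $\|\psi(y)\|_2^2\le 2Nt^2$, and is undone afterwards by conjugating back with $B^{1/2}$ at the cost of a fixed factor in the constants); so I may as well assume $\frac1L\sum_{l=1}^L\phi(y_l)\phi(y_l)^{*}=I_N$ exactly.

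It remains to choose $m\le C_1t^2N$ of the $L$ points so that the corresponding average of rank-one projections is comparable to $\frac mL I_N$. Put $w_l:=L^{-1/2}\phi(y_l)$, so $\sum_{l=1}^L w_lw_l^{*}=I_N$ and $\|w_l\|_2^2\le Nt^2/L$, and take $m=\lceil C_1t^2N\rceil$. The crucial observation is that the coherence ratio $\max_l\|w_l\|_2^2\big/(m/L)\le Nt^2/m\le 1/C_1$ is an arbitrarily small absolute constant once $C_1$ is chosen large. This is precisely the hypothesis under which the main technical result of the paper, Lemma \ref{Lim} on submatrices of matrices with orthonormal columns (a statement of Kadison--Singer / Marcus--Spielman--Srivastava type), yields a subset $J\subset\{1,\dots,L\}$ with $|J|=m$ and $c_1\frac mL I_N\preceq\sum_{l\in J}w_lw_l^{*}\preceq c_2\frac mL I_N$ for absolute constants $c_1,c_2>0$. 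Setting $\{\xi^j\}_{j=1}^m:=\{y_l:l\in J\}$ and multiplying by $L/m$ gives $c_1 I_N\preceq\frac1m\sum_{j=1}^m\phi(\xi^j)\phi(\xi^j)^{*}\preceq c_2 I_N$ — which even produces the upper bound without the factor $t^2$, so $C_3t^2$ holds with room to spare because $t\ge1$ — and undoing the whitening finishes the argument.

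The whole difficulty is concentrated in Lemma \ref{Lim}. A direct random choice of $m$ rows, analyzed with the matrix Chernoff/Bernstein inequality, only controls $\frac1m\sum_{j}\phi(\xi^j)\phi(\xi^j)^{*}$ when $m\gtrsim t^2N\log N$, so the logarithm-free count $m\asymp t^2N$ forces the interlacing-polynomials machinery behind the solution of the Kadison--Singer problem, which must be pushed to deliver simultaneously an upper ``paving'' bound and a restricted-invertibility-type lower bound on $\sum_{l\in J}w_lw_l^{*}$. Everything else — the matrix reformulation, the Carath\'eodory reduction to finitely many equal-weight points, the whitening, and tracking absolute constants — is routine; the only mild point needing care is that the finite reduction preserve Condition E, which it does automatically since that inequality is pointwise.
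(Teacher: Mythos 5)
Your proposal follows essentially the same route as the paper: reformulate in matrix language, reduce to a finite set with equal weights, and then apply the main Lemma \ref{Lim}. The only real divergence is in the finite reduction: the paper invokes Proposition \ref{P2} (from \cite{DPTT}) to get a discrete $\Omega_M$ on which the $L_2$-norm is preserved up to a factor $1/2$, and then re-orthonormalizes the restricted subspace before applying Theorem \ref{BT1}; your Carath\'eodory/Tchakaloff argument followed by whitening with $B^{-1/2}$ plays exactly the same role and is equally valid (and arguably more self-contained). Two inaccuracies are worth flagging, neither of which sinks the argument. First, Lemma \ref{Lim} does not allow you to prescribe $|J|=m$ in advance: it only guarantees $|J|\le C_1\theta N$, together with the implicit lower bound $|J|\ge c_0N$ obtained by summing the lower frame inequality over the canonical basis (Remark \ref{R4.1}). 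Second, your parenthetical claim that the selection ``even produces the upper bound without the factor $t^2$'' is incorrect: the lemma's upper frame bound is $C_0\theta\frac{N}{L}\|\bw\|_2^2$ with $\theta\asymp t^2$, and since $|J|$ is only guaranteed to be at least $c_0N$ (independently of $t$), normalizing by $m/L$ leaves an upper constant of order $t^2$ --- exactly the $C_3t^2$ appearing in Theorem \ref{IT4}, not an absolute constant. With these two points corrected, your argument is a valid proof along the same lines as the paper's.
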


It is known that Condition E is equivalent to the fact that the subspace $X_N:= \sp(u_1,\dots,u_N)$ 
satisfies the Nikol'skii inequality for the pair $(2,\infty)$ (see Section \ref{B} for a detailed discussion). In Section \ref{B} we reformulate Theorem \ref{IT4}
in terms of Nikol'skii's inequality (see Theorem \ref{BT4} and Remark \ref{BR2}) and prove it.

{\bf II. General weights.}  Theorem \ref{ITw} solves (in the sense of order) the problem 
of weighted discretization for arbitrary $N$-dimensional subspaces of $L_2(\Omega,\mu)$. 

\begin{Theorem}\label{ITw} If $X_N$ is an $N$-dimensional subspace of the complex $L_2(\Omega,\mu)$, then there exist three absolute positive constants $C_1'$, $c_0'$, $C_0'$,  a set of $m\leq   C_1'N$ points $\xi^1,\ldots, \xi^m\in\Omega$, and a set of nonnegative  weights $\lambda_j$, $j=1,\ldots, m$,  such that
\[ c_0'\|f\|_2^2\leq  \sum_{j=1}^m \lambda_j |f(\xi^j)|^2 \leq  C_0' \|f\|_2^2,\  \ \forall f\in X_N.\]
\end{Theorem}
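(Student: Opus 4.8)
The plan is to reduce the statement about a general $N$-dimensional subspace $X_N$ to the situation governed by Condition E, where the heavy lifting has already been done in Theorem \ref{IT4} (equivalently, Lemma \ref{Lim} and Corollary \ref{weighted}). The first step is a change of measure. Pick an orthonormal basis $u_1,\dots,u_N$ of $X_N$ in $L_2(\Omega,\mu)$ and consider the Christoffel-type function $w(x):=\sum_{i=1}^N|u_i(x)|^2$. Since $\int_\Omega w\,d\mu=N$, the measure $d\nu:=\frac1N w\,d\mu$ is a probability measure on the set $\Omega':=\{x:w(x)>0\}$ (note every $f\in X_N$ vanishes off $\Omega'$, so nothing is lost). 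Define $v_i(x):=\sqrt{N}\,u_i(x)/\sqrt{w(x)}$ on $\Omega'$. Then $\sum_i|v_i(x)|^2=N$ identically, so $\{v_i\}$ satisfies Condition E with $t=1$, provided we check that $\{v_i\}$ is an orthonormal system in $L_2(\Omega',\nu)$ — and indeed $\int v_i\bar v_j\,d\nu=\int u_i\bar u_j\,d\mu=\delta_{ij}$ by construction. So the reweighted system is a genuine Condition-E system with the best possible constant.

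The second step is to apply Theorem \ref{IT4} (or directly Corollary \ref{weighted}) to the orthonormal system $\{v_i\}$ on $(\Omega',\nu)$ with $t=1$: this yields a set of points $\xi^1,\dots,\xi^m\in\Omega'$ with $m\le C_1 N$ and absolute constants $c,C>0$ such that
\[
c\,\|g\|_{L_2(\nu)}^2 \le \frac1m\sum_{j=1}^m|g(\xi^j)|^2 \le C\,\|g\|_{L_2(\nu)}^2,\qquad \forall\, g\in \sp(v_1,\dots,v_N).
\]
The third and final step is to translate this back. Given $f=\sum_i c_i u_i\in X_N$, set $g:=\sum_i c_i v_i$; then $g(x)=\sqrt{N/w(x)}\,f(x)$ on $\Omega'$, and $\|g\|_{L_2(\nu)}^2=\|f\|_{L_2(\mu)}^2$. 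Substituting into the discretization inequality gives
\[
c\,\|f\|_2^2 \le \frac1m\sum_{j=1}^m \frac{N}{w(\xi^j)}\,|f(\xi^j)|^2 \le C\,\|f\|_2^2.
\]
Thus the weights $\lambda_j:=\dfrac{N}{m\,w(\xi^j)}$ are nonnegative (finite because $\xi^j\in\Omega'$), $m\le C_1'N$, and $c_0'=c$, $C_0'=C$ are the desired absolute constants. This is exactly the claimed conclusion.

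The one genuinely delicate point is the appeal to Theorem \ref{IT4}, and in particular whether the cited discretization result applies to the probability space $(\Omega',\nu)$ rather than $(\Omega,\mu)$ — but since the hypotheses of Theorem \ref{IT4} concern only an abstract probability measure and an orthonormal system satisfying Condition E, this causes no difficulty. A minor technical care is needed around the set $\{w=0\}$: one must observe that every $f\in X_N$ vanishes identically there (write $f=\sum c_iu_i$ and note $w(x)=0$ forces each $u_i(x)=0$), so restricting to $\Omega'$ changes neither the $L_2(\mu)$-norm nor the value of any point-evaluation used in the formula, and the sampling points $\xi^j$ automatically lie in $\Omega'$. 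The reason the weighted case is easier than the equal-weight Theorem \ref{IT4} is precisely that this normalization step is available only when we are allowed to absorb the factor $N/w(\xi^j)$ into the weights; with equal weights one cannot perform the reduction and must work harder, which is why Theorem \ref{IT4} additionally requires Condition E as a hypothesis whereas Theorem \ref{ITw} is unconditional.
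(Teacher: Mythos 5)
Your proof is correct, but it takes a genuinely different route from the paper's. The paper obtains Theorem \ref{ITw} from the MSS-based Corollary \ref{weighted} through Theorem \ref{BT2} (a complex analog of the Batson--Spielman--Srivastava theorem on a discrete set $\Omega_M$), and then transfers to a general probability space by repeating the argument of Theorem 6.3 of \cite{DPSTT2}; alternative proofs are indicated in Remark \ref{BR1} and, via the complexification Proposition \ref{BPcr} combined with Theorem \ref{DPSTT}, in Remark \ref{BRcr}. You instead use the classical change-of-density reduction: with $w=\sum_{i=1}^N|u_i|^2$, $d\nu=N^{-1}w\,d\mu$ on $\Omega'=\{w>0\}$, the system $v_i=\sqrt{N}\,u_i/\sqrt{w}$ is orthonormal in $L_2(\Omega',\nu)$ and satisfies Condition E with $t=1$, so the equal-weight Theorem \ref{IT4} applies, and pulling back with $\lambda_j=N/(m\,w(\xi^j))$ gives the claim. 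Your checks are the right ones and they go through: $\nu$ is a probability measure, every $f\in X_N$ vanishes on $\{w=0\}$ (under the paper's convention that elements of $X_N$ are defined everywhere), $\|g\|_{L_2(\nu)}=\|f\|_{L_2(\mu)}$, the sampling points lie in $\Omega'$ so the weights are finite, and there is no circularity since Theorem \ref{IT4} is proved independently of Theorem \ref{ITw} and is stated for an arbitrary nonempty $\Omega\subset\R^d$ with a probability measure (Condition E with $t=1$ also makes the $v_i$ bounded, so the machinery behind Theorem \ref{IT4}, e.g.\ Proposition \ref{P2}, applies). What each approach buys: yours is shorter and exhibits the weighted theorem as a formal corollary of the equal-weight one, but it inherits the unspecified absolute constants of Theorem \ref{IT4}; the paper's routes give sharper quantitative control (e.g.\ Remark \ref{BRcr} yields $m\le\lceil 2bN\rceil$ with lower constant $1$ and upper constant $C(b-1)^{-2}$). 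One small caveat: your parenthetical suggestion to apply ``directly Corollary \ref{weighted}'' does not work verbatim on a non-discrete $(\Omega',\nu)$, since that corollary concerns finitely many vectors in $\bbC^N$; the correct path in your argument is through Theorem \ref{IT4}.
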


\begin{Remark}\label{IRw} {We formulate Theorem \ref{ITw} in terms of the class of nonnegative  weights $\lambda_j$, $j=1,\ldots, m$. This class of weights is a standard class in numerical integration (cubature formulas) and in discretization. Clearly, the terms with $\lambda_j=0$ can be dropped and then we come to the class of positive weights, provided at least one of the weights is positive}. 
\end{Remark}

Theorem \ref{ITw} is proved in Section \ref{B}. For the reader's convenience it is formulated there again as 
Theorem \ref{BT3}.

\bigskip

{\bf Novelty and impact.} Theorems \ref{IT4} and \ref{ITw} solve (in the sense of order) two sampling discretization problems at a reasonable level of generality. In the case of the Marcinkiewicz-type discretization with equal weights we only impose Condition E, which is a standard condition in this case (see
Subsection \ref{Ib} for details). Theorem \ref{ITw} provides a discretization result for any subspace of $L_2$, which is important for applications. A preprint version of this paper has been published in \cite{LT}.
It has made an immediate impact on research of the optimal sampling recovery. Theorem \ref{ITw} was used in \cite{VT183} for proving an important inequality for the optimal sampling recovery. For the reader's 
convenience we formulate it here. Recall the setting 
 of the optimal recovery. For a fixed $m$ and a set of points  $\xi:=\{\xi^j\}_{j=1}^m\subset \Omega$, let $\Phi_\xi $ be a linear operator from $\bbC^m$ into $L_p(\Omega,\mu)$.
Denote for a class $\bF$ (usually, centrally symmetric and compact subset of $L_p(\Omega,\mu)$)
$$
\varrho_m(\bF,L_p) := \inf_{\text{linear}\, \Phi_\xi; \,\xi} \sup_{f\in \bF} \|f-\Phi_\xi(f(\xi^1),\dots,f(\xi^m))\|_p.
$$
The following statement was proved in \cite{VT183}. There exist two positive absolute constants $b$ and $B$ such that for any   compact subset $\Omega$  of $\R^d$, any probability measure $\mu$ on it, and any compact subset $\bF$ of $\C(\Omega)$ we have
\be\label{osrin}
\ro_{bn}(\bF,L_2(\Omega,\mu)) \le Bd_n(\bF,L_\infty).
\ee
Here, $d_n(\bF,L_\infty)$ is the Kolmogorov width of $\bF$ in the uniform norm. In turn, inequality (\ref{osrin}) was used in \cite{TU1} to prove new bounds for the optimal sampling recovery of functions 
with small mixed smoothness. 

The proof of Theorem \ref{IT4} is based on Lemma \ref{Lim}. A version of this lemma (see Remark \ref{LimR} below) was used in \cite{NSU} for a breakthrough result on the sampling recovery. 

Note that our proofs of Theorems \ref{IT4} and \ref{ITw} are not technically involved because they are based on deep known results.

 \subsection{Historical comments on discretization with equal weights}\label{Ib}
 
 We begin with the formulation of Rudelson's result from \cite{Rud}. In the paper \cite{Rud} it is formulated in terms of submatrices of an orthogonal matrix. We reformulate it in our setting. 
Note that Theorem \ref{Rud} can be derived from the original result of Rudelson in the same way as we derive Theorem \ref{BT1} from Lemma \ref{Lim} (see Section \ref{B} below). 

\begin{Theorem}[\cite{Rud}]\label{Rud}  Let $\Omega_M=\{x^j\}_{j=1}^M$ be a discrete set with the probability measure $\mu_M(x^j)=1/M$, $j=1,\dots,M$. Assume that 
  a real orthonormal system $\{u_i(x)\}_{i=1}^N$   satisfies   Condition E on $\Omega_M$. 
 Then for every $\ep>0$ there exists a set $J\subset \{1,\dots,M\}$ of indices with cardinality 
\be\label{5.1a}
m:=|J| \le C\frac{t^2}{\ep^2}N\log\frac{Nt^2}{\ep^2}
\ee
such that for any $f=\sum_{i=1}^N c_iu_i$ we have
$$
(1-\ep)^2\|f\|_2^2 \le \frac{1}{m} \sum_{j\in J} f(x^j)^2 \le (1+\ep)^2\|f\|_2^2.
$$
\end{Theorem}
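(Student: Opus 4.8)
The plan is to reduce the statement to the matrix-theoretic formulation in which Rudelson proved it, and then translate back. Consider the matrix $U = (u_i(x^j))$ with rows indexed by $j \in \{1,\dots,M\}$ and columns by $i \in \{1,\dots,N\}$; write $y^j \in \R^N$ (or $\bbC^N$) for the $j$-th row. Since $\{u_i\}$ is orthonormal on $\Omega_M$ with the uniform measure, we have $\frac{1}{M}\sum_{j=1}^M y^j (y^j)^{\ast} = I_N$, i.e. $\frac{1}{M}U^{\ast}U = I_N$; and for $f = \sum_i c_i u_i$ with $c = (c_1,\dots,c_N)$ we have $f(x^j) = \langle y^j, \bar c\rangle$ and $\|f\|_2^2 = \|c\|_2^2$ by Parseval. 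Condition E says precisely $\|y^j\|_2^2 \le N t^2$ for every $j$. So the desired inequality
\[
(1-\ep)^2 \|c\|_2^2 \le \frac{1}{m}\sum_{j\in J} |\langle y^j, \bar c\rangle|^2 \le (1+\ep)^2 \|c\|_2^2, \qquad \forall c,
\]
is exactly the statement that the averaged outer product $\frac{1}{m}\sum_{j\in J} y^j (y^j)^{\ast}$ has spectrum inside $[(1-\ep)^2,(1+\ep)^2]$, equivalently $\bigl\|\frac{1}{m}\sum_{j\in J} y^j(y^j)^{\ast} - I_N\bigr\| \le (1+\ep)^2 - 1$ (up to adjusting $\ep$ by an absolute factor, which is harmless).

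Next I would invoke Rudelson's selection lemma in its sampling form: from a system of vectors $\{y^j\}_{j=1}^M$ in $\R^N$ with uniform second-moment $\frac{1}{M}\sum_j y^j(y^j)^{\ast} = I_N$ and uniform bound $\|y^j\|_2 \le \sqrt{N}\,t$, one can choose a subset $J$ of size $m$ as in (\ref{5.1a}) with $\frac{1}{m}\sum_{j\in J} y^j(y^j)^{\ast} \approx I_N$ in operator norm up to $\ep$. This is the heart of the matter and is a sampling-with-replacement argument: pick $i.i.d.$ indices $j_1,\dots,j_m$ uniformly from $\{1,\dots,M\}$ so that $\bE\, y^{j_k}(y^{j_k})^{\ast} = I_N$, and control the deviation $\bigl\|\frac{1}{m}\sum_{k=1}^m y^{j_k}(y^{j_k})^{\ast} - I_N\bigr\|$ in expectation via a symmetrization followed by Rudelson's non-commutative Khintchine/entropy estimate for $\bE \bigl\|\sum_k \varepsilon_k y^{j_k}(y^{j_k})^{\ast}\bigr\|$, which yields a bound of the shape $C\sqrt{\frac{\log m}{m}} \max_k \|y^{j_k}\| \le C t\sqrt{\frac{N\log m}{m}}$; requiring this to be $\le \ep$ forces $m \gtrsim t^2 N \ep^{-2}\log m$, and solving the implicit inequality gives the logarithmic factor in (\ref{5.1a}). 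A repeated index can be dropped (or the measure weights collapsed) without harming the two-sided bound, so one may assume $J$ is a genuine subset.

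The main obstacle is the probabilistic estimate in the previous paragraph — everything else is bookkeeping — but since the problem statement explicitly allows us to quote earlier results and since this is \emph{Rudelson's} theorem, I would simply cite \cite{Rud} for the operator-norm selection statement and devote the write-up to the dictionary: matrix $U \leftrightarrow$ orthonormal system, rows $\leftrightarrow$ point evaluations, Condition E $\leftrightarrow$ uniform row-norm bound, and the spectral bound $\leftrightarrow$ the two-sided $L_2$ discretization inequality with constants $(1-\ep)^2$, $(1+\ep)^2$. The only point needing a line of care is the $\ep$-rescaling: a bound of the form $\|\cdot - I\| \le \ep$ gives $(1-\ep)\|c\|_2^2 \le \cdots \le (1+\ep)\|c\|_2^2$, and to land on the squared constants one replaces $\ep$ by a comparable quantity (e.g. $\ep' $ with $(1+\ep')^2 - 1 \asymp \ep$), which only changes the absolute constant $C$ in (\ref{5.1a}); this is exactly the same cosmetic adjustment made later in deriving Theorem \ref{BT1} from Lemma \ref{Lim}, so I would cross-reference that derivation rather than repeat it.
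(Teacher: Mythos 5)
Your proposal is correct and follows essentially the same route as the paper: the paper gives no independent proof of Theorem \ref{Rud} either, stating only that it ``can be derived from the original result of Rudelson in the same way as we derive Theorem \ref{BT1} from Lemma \ref{Lim},'' which is precisely the dictionary (rows $\bv_j=M^{-1/2}(u_1(x^j),\dots,u_N(x^j))^T$, orthonormality $\leftrightarrow$ tight frame, Condition E $\leftrightarrow$ row-norm bound, Parseval $\leftrightarrow$ $\|f\|_2=\|c\|_2$) that you write out, with the hard probabilistic selection step delegated to \cite{Rud}. The only cosmetic caveat is that Rudelson's original statement already produces a genuine subset $J$ (via independent selectors), so your aside about dropping repeated indices from a with-replacement sample is unnecessary if you cite the original form.
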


In \cite{VT159}  it was demonstrated how the Bernstein-type concentration inequalities for random matrices can be used to prove an analog of Theorem \ref{Rud}  for a general $\Omega$. The proof in \cite{VT159} is based on a different idea than the Rudelson's proof. 
Here is the corresponding result.  

\begin{Theorem}[{\cite[Theorem 6.6]{VT159}}]\label{IT2} Let $\{u_i(x)\}_{i=1}^N$ be a real orthonormal in $L_2(\Omega,\mu)$ system satisfying Condition E. Then for every $\ep>0$ there exists a set $\{\xi^j\}_{j=1}^m \subset \Omega$ with
$$
m  \le C\frac{t^2}{\ep^2}N\log N
$$
such that for any $f=\sum_{i=1}^N c_iu_i$ we have
$$
(1-\ep)\|f\|_2^2 \le \frac{1}{m} \sum_{j=1}^m f(\xi^j)^2 \le (1+\ep)\|f\|_2^2.
$$
\end{Theorem}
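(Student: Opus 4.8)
The plan is to follow the random‑sampling approach of \cite{VT159}: draw the points $\xi^1,\dots,\xi^m$ independently at random according to the measure $\mu$, form the associated random Gram‑type matrix, and control its deviation from the identity using a Bernstein‑type (matrix Chernoff) concentration inequality. Concretely, for a point $\xi\in\Omega$ let $v(\xi)\in\R^N$ (or $\bbC^N$) be the vector with coordinates $u_i(\xi)$, $i=1,\dots,N$, and consider the rank‑one matrices $Y_j := v(\xi^j)v(\xi^j)^{*}$. Orthonormality of $\{u_i\}$ gives $\bE\, Y_j = I_N$ for each $j$, so that $\bE\,\frac1m\sum_{j=1}^m Y_j = I_N$; and for $f=\sum_i c_i u_i$ with $c=(c_1,\dots,c_N)$ one has $\frac1m\sum_j |f(\xi^j)|^2 = \langle \big(\frac1m\sum_j Y_j\big) c, c\rangle$ while $\|f\|_2^2 = \|c\|_2^2$. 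Thus the desired two‑sided inequality is exactly the statement that all eigenvalues of $\frac1m\sum_j Y_j$ lie in $[1-\ep,1+\ep]$.

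The key quantitative input is Condition E, which gives the uniform bound $\|v(\xi)\|_2^2 = \sum_{i=1}^N |u_i(\xi)|^2 \le N t^2$ for all $\xi$, hence $\|Y_j\| \le N t^2$ almost surely. Feeding this into a matrix Bernstein / Ahlswede–Winter‑type inequality for sums of i.i.d.\ positive semidefinite matrices bounded by $Nt^2$ and with mean $I_N$, one obtains that
\[
\bP\!\left(\Big\|\tfrac1m\sum_{j=1}^m Y_j - I_N\Big\| > \ep\right) \le 2N\exp\!\left(-\,\frac{c\,\ep^2 m}{Nt^2}\right)
\]
for an absolute constant $c>0$. The right‑hand side is strictly less than $1$ as soon as $m \ge C\,\ep^{-2} t^2 N \log N$ with $C$ large enough (the extra $\log N$ coming precisely from the dimensional factor $2N$ in front of the exponential). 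For such $m$ the probability that the sampled matrix fails the eigenvalue bound is $<1$, so there exists at least one realization $\{\xi^j\}_{j=1}^m$ for which $1-\ep \le \lambda_{\min} \le \lambda_{\max} \le 1+\ep$, and this realization gives the claimed discretization.

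The main obstacle — and the only genuinely nontrivial ingredient — is the matrix concentration inequality itself: one needs a Bernstein/Chernoff bound for sums of independent PSD matrices with a uniform operator‑norm bound on the summands, applied with the sharp dependence on the norm bound $Nt^2$ so that the final sample size scales like $t^2 N\log N$ rather than worse. This is a deep but standard result (Ahlswede–Winter, Tropp), and it is exactly the tool invoked in \cite[Theorem 6.6]{VT159}; once it is in hand, everything else is the routine translation between the $L_2$‑norm of $f\in X_N$, the coefficient vector $c$, and the quadratic form of the random matrix. A minor point to address is the complex versus real case: working with $v(\xi)v(\xi)^{*}$ and Hermitian matrices handles both simultaneously, with $N$ real parameters in the real case and the complex case reducing to a real $2N$‑dimensional problem if one prefers, affecting only the absolute constants.
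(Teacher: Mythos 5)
Your proposal is correct and follows exactly the route the paper attributes to this result: Theorem \ref{IT2} is quoted from \cite{VT159}, where, as stated in Subsection \ref{Ib}, the proof is based on Bernstein-type concentration inequalities for random matrices applied to the i.i.d.\ rank-one sums $v(\xi^j)v(\xi^j)^{*}$, with Condition E supplying the uniform bound $\|v(\xi)\|_2^2\le Nt^2$ and the dimensional factor $N$ in the tail bound producing the $\log N$ in the sample size. Your reduction of the two-sided discretization inequality to the eigenvalue bound $\lambda_{\min},\lambda_{\max}\in[1-\ep,1+\ep]$ and the resulting requirement $m\ge C\ep^{-2}t^2N\log N$ match the cited argument.
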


We note that Theorem \ref{IT2} is more general and slightly stronger than Theorem \ref{Rud}.   Theorem \ref{IT2} provides the Marcinkiewicz-type discretization theorem for a general domain $\Omega$ instead of a discrete set $\Omega_M$. Also, in Theorem \ref{IT2} we have an extra factor $\log N$ instead of $\log\frac{Nt^2}{\ep^2}$  in (\ref{5.1a}). A typical necessary condition for the Marcinkiewicz-type discretization theorem to hold for an $N$-dimensional subspace $X_N$ is $m\ge N$. For instance, such a necessary condition holds when $X_N$ is an $N$-dimensional subspace of continuous functions
with $\Omega =[0,1]$ and the Lebesgue measure on it. 
Both Theorem \ref{Rud} and Theorem \ref{IT2} provide sufficient conditions on $m$ (the upper bound) for existence of a good set of cardinality $m$ for sampling discretization. These sufficient conditions are close to the necessary condition, which is $m\ge N$, but still have an extra $\log N$ factor in the bound for $m$. The main goal of this paper is to prove a sufficient condition on $m$ without an extra $\log N$ factor in the upper bound, which guarantees the Marcinkiewicz-type discretization theorem in $L_2$. This is done in Theorem \ref{IT4}. The first result in that direction was obtained under a condition stronger than Condition E. 

\begin{Theorem}[{\cite[Theorem 4.7]{VT158}}]\label{IT3} Let  $\Omega_M=\{x^j\}_{j=1}^M$ be a discrete set with  the probability measure $\mu_M(x^j)=1/M$, $j=1,\dots,M$. Assume that 
$\{u_i(x)\}_{i=1}^N$ is an orthonormal on $\Omega_M$ system (real or complex). Assume in addition that this system has the following property: for all $j=1,\dots, M$ we have
\be\label{4.19}
\sum_{i=1}^N |u_i(x^j)|^2 = N.
\ee
Then there is an absolute constant $C_1$ such that there exists a subset $J\subset \{1,2,\dots,M\}$ with the property:  $m:=|J| \le C_1 N$ and
 for any $f=\sum_{i=1}^N c_iu_i$ we have  
$$
C_2 \|f\|_2^2 \le \frac{1}{m}\sum_{j\in J} |f(x^j)|^2 \le C_3 \|f\|_2^2, 
$$
where $C_2$ and $C_3$ are absolute positive constants. 
\end{Theorem}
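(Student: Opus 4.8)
The plan is to restate the theorem as an assertion about rank-one decompositions of the identity and then feed it into the Marcus--Spielman--Srivastava theorem (the solution of the Kadison--Singer problem, equivalently Weaver's $KS_2$ conjecture), applied iteratively to thin out the point set. First I would translate: put $a_j\in\mathbb{C}^N$ with $(a_j)_i:=M^{-1/2}\overline{u_i(x^j)}$. Then orthonormality of $\{u_i\}$ on $\Omega_M$ is exactly $\sum_{j=1}^M a_ja_j^*=I_N$, the extra hypothesis (\ref{4.19}) is exactly $\|a_j\|_2^2=N/M$ for every $j$, and for $f=\sum_i c_iu_i$ one computes $|f(x^j)|^2=M\,c^*a_ja_j^*c$ together with $\|f\|_2^2=\|c\|_2^2$. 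Hence, writing $S_J:=\sum_{j\in J}a_ja_j^*$, the conclusion for a set $J$ of $m$ points is equivalent to the two-sided operator bound $\tfrac{C_2m}{M}\,I_N\preceq S_J\preceq\tfrac{C_3m}{M}\,I_N$. If $M\le C_1N$ (with $C_1$ the absolute constant fixed at the end) I would simply take $J=\{1,\dots,M\}$, for which $S_J=I_N$ and $\tfrac1m\sum_j|f(x^j)|^2=\|f\|_2^2$; so I may assume $M>C_1N$.

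Next I would iterate a halving step. I would use MSS in the form: there are absolute constants $\delta_0>0$ and $c_0\ge1$ such that whenever $\sum_k w_kw_k^*=I_N$ with $\max_k\|w_k\|_2^2\le\delta\le\delta_0$, the index set splits as $T\sqcup T^c$ with $(\tfrac12-c_0\sqrt\delta)I_N\preceq\sum_{k\in T}w_kw_k^*\preceq(\tfrac12+c_0\sqrt\delta)I_N$, and likewise for $T^c$ (the lower bound for one block follows from the upper bound for the other, since the two sum to $I_N$); this is valid over $\mathbb{R}$ and over $\mathbb{C}$, covering both cases of the theorem. Starting from $J_0=\{1,\dots,M\}$ and $\alpha_0=\beta_0=1$, and given $J_k$ with $\alpha_kI_N\preceq S_{J_k}\preceq\beta_kI_N$, I would set $w_j:=S_{J_k}^{-1/2}a_j$ (so $\sum_{j\in J_k}w_jw_j^*=I_N$ and $\max_j\|w_j\|_2^2\le(N/M)/\alpha_k=:\eta_k$), apply MSS, and keep the block with fewer indices as $J_{k+1}$; conjugating the block inequalities by $S_{J_k}^{1/2}$ yields $\alpha_{k+1}=(\tfrac12-c_0\sqrt{\eta_k})\alpha_k$, $\beta_{k+1}=(\tfrac12+c_0\sqrt{\eta_k})\beta_k$, and $|J_{k+1}|\le|J_k|/2$. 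I would stop at the largest $\ell$ with $2^\ell\le M/(\kappa_0N)$, where $\kappa_0$ is a sufficiently large absolute constant.

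What remains is bookkeeping. By induction $\alpha_k=2^{-k}\prod_{i<k}(1-2c_0\sqrt{\eta_i})\ge2^{-k-1}$ as long as $\sum_ic_0\sqrt{\eta_i}\le\tfrac14$; this forces $\eta_k\le2^{k+1}N/M$, and then $\sum_{k<\ell}c_0\sqrt{\eta_k}\lesssim c_0\sqrt{N/M}\,2^{\ell/2}\le c_0/\sqrt{\kappa_0}$, which is $\le\tfrac14$ once $\kappa_0$ is large — closing the induction and, en route, guaranteeing $\eta_k\le1/\kappa_0\le\delta_0$, so every call to MSS is legitimate. The same sum bounds the condition number,
\[\beta_\ell/\alpha_\ell=\prod_{k<\ell}\frac{1/2+c_0\sqrt{\eta_k}}{1/2-c_0\sqrt{\eta_k}}\le\exp\!\Big(8\sum_{k<\ell}c_0\sqrt{\eta_k}\Big)\le\kappa\]
for an absolute $\kappa$. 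Finally, taking the trace and using $\|a_j\|_2^2=N/M$ gives $m\cdot\tfrac NM=\operatorname{tr}S_{J_\ell}\in[N\alpha_\ell,\,N\beta_\ell]$, i.e.\ $\alpha_\ell\le\tfrac mM\le\beta_\ell\le\kappa\alpha_\ell$; combined with $\alpha_\ell I_N\preceq S_{J_\ell}\preceq\beta_\ell I_N$ this yields $\tfrac1\kappa\,\tfrac mM\,I_N\preceq S_{J_\ell}\preceq\kappa\,\tfrac mM\,I_N$, which is the theorem with $C_2=1/\kappa$ and $C_3=\kappa$. Moreover $m=|J_\ell|\le M/2^\ell<2\kappa_0N=:C_1N$ (since $2^{\ell+1}>M/(\kappa_0N)$), and the trace identity with $\alpha_\ell\ge2^{-\ell-1}$ also forces $m\ge M\alpha_\ell\gtrsim N$, so in fact $m\asymp N$.

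I expect the genuinely delicate step to be this last accounting. A single MSS split worsens the frame constants by a factor $(1\pm c_0\sqrt{\eta_k})$, and $\eta_k$ roughly doubles at each halving, so a priori the product of these factors over the $\asymp\log_2(M/N)$ steps could diverge; it stays bounded only because $\eta_0=N/M$ is tiny and the geometric sum $\sum_k2^{k/2}\sqrt{N/M}$ is dominated by its last term, which is $\asymp1$ rather than $\asymp\sqrt{\log(M/N)}$ — getting this telescoping right is the crux of the argument. I would also point out that the equal-norm hypothesis (\ref{4.19}) enters essentially only at the very end, in the trace step, to convert two-sided spectral control of $S_{J_\ell}$ into an \emph{equal-weight} discretization; dispensing with it (keeping only Condition E) is precisely what necessitates the sharper Lemma~\ref{Lim}.
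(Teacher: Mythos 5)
Your proposal is correct and follows essentially the same route as the paper: reformulate the hypothesis as a rank-one decomposition of the identity with equal-norm vectors and then iterate the Marcus--Spielman--Srivastava halving step (keeping the smaller block) until the frame bounds reach the scale $N/M$, with the accumulated distortion controlled by the geometric sum $\sum_k\sqrt{2^kN/M}$ — this is exactly the Lunin-type iteration the paper runs through Proposition \ref{AP1} and Lemma \ref{AL1} in its proof of Lemma \ref{Lim} (and, in the equal-norm case, Lemma \ref{NOUL2} plus the trace argument of Remark \ref{R4.1}). The only cosmetic differences are that you re-derive the non-tight-frame halving step inline via the $S_{J_k}^{-1/2}$ normalization instead of quoting \cite{NOU}, and you fix the constants at the end by the trace identity rather than by the observation $m\ge N$.
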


 In Theorem \ref{BT1} we strengthen Theorem \ref{IT3} by showing that the same is true under the weaker Condition E instead of \eqref{4.19}.

 \subsection{Historical comments on weighted discretization}\label{Ic}
 
In the case of weighted discretization, namely, when instead of $\frac{1}{m}\sum_{j=1}^m |f(\xi^j)|^2$ we use the weighted sum $\sum_{j=1}^m\lambda_j |f(\xi^j)|^2$, 
the problem of discretization is solved in the sense of order  in the case of real subspaces $X_N$. It is pointed out in \cite{VT159} that the paper by J. Batson, D.A. Spielman, and N. Srivastava \cite{BSS}  basically solves the discretization problem with weights.  We present an explicit formulation of this important result in our notation.

\begin{Theorem}[{\cite[Theorem 3.1]{BSS}}]\label{BSS} Let  $\Omega_M=\{x^j\}_{j=1}^M$ be a discrete set with the probability measure $\mu_M(x^j)=1/M$, $j=1,\dots,M$, and let $X_N$ be an $N$-dimensional subspace of real functions defined on $\Omega_M$.
Then for any
number $b>1$ there  exists a set of weights $\lambda_j\ge 0$ such that $|\{j: \lambda_j\neq 0\}| \le \lceil bN \rceil$ so that for any $f\in X_N $ we have
\begin{equation*}
\|f\|_2^2 \le \sum_{j=1}^M \lambda_jf(x^j)^2 \le \frac{b+1+2\sqrt{b}}{b+1-2\sqrt{b}}\|f\|_2^2.
\end{equation*}
\end{Theorem}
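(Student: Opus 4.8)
The plan is to reduce the statement to a purely linear-algebraic spectral sparsification problem and then establish the latter by the deterministic barrier-function (interlacing-potentials) method. Here is the reduction. Since $\mu_M$ assigns positive mass $1/M$ to every point, $\langle f,g\rangle:=\frac1M\sum_{j=1}^Mf(x^j)g(x^j)$ is an inner product on the space of all real functions on $\Omega_M$, so $X_N$ has an orthonormal basis $u_1,\dots,u_N$ with respect to it. Put $v_j:=M^{-1/2}\bigl(u_1(x^j),\dots,u_N(x^j)\bigr)^T\in\mathbb R^N$. Orthonormality of the $u_i$ is exactly the identity $\sum_{j=1}^Mv_jv_j^T=I_N$, and for $f=\sum_ic_iu_i$ one has $\|f\|_2^2=\|c\|_2^2$ and $f(x^j)^2=M\langle c,v_j\rangle^2$. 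Hence it suffices to find numbers $s_j\ge 0$, at most $\lceil bN\rceil$ of them nonzero, with $I_N\preceq\sum_{j=1}^Ms_jv_jv_j^T\preceq\kappa I_N$ for $\kappa=\frac{b+1+2\sqrt b}{b+1-2\sqrt b}$; then $\lambda_j:=s_j/M$ works.

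Next I would build $A_q:=\sum_js_jv_jv_j^T$ greedily, adding one rank-one term at a time and steering the construction by two barrier potentials: for a symmetric matrix $A$ and reals $l<\lambda_{\min}(A)\le\lambda_{\max}(A)<u$ set $\Phi^u(A):=\mathrm{Tr}(uI-A)^{-1}$ and $\Phi_l(A):=\mathrm{Tr}(A-lI)^{-1}$, which blow up when a barrier approaches the spectrum, so that bounding them traps the eigenvalues of $A$ strictly between $l$ and $u$. I would start from $A_0=0$, $u_0=N/\varepsilon_U$, $l_0=-N/\varepsilon_L$ (so $\Phi^{u_0}(A_0)=\varepsilon_U$ and $\Phi_{l_0}(A_0)=\varepsilon_L$) and run $q:=\lceil bN\rceil$ steps: at step $k$ replace $A_{k-1}$ by $A_k=A_{k-1}+t_kv_{j_k}v_{j_k}^T$ with $t_k>0$, shift $u_k=u_{k-1}+\delta_U$ and $l_k=l_{k-1}+\delta_L$, and preserve the invariants $\Phi^{u_k}(A_k)\le\varepsilon_U$, $\Phi_{l_k}(A_k)\le\varepsilon_L$.

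The engine is the one-step lemma: if $\Phi^u(A)\le\varepsilon_U$ and $\Phi_l(A)\le\varepsilon_L$, there is an index $j$ with $v_j\ne 0$ and a number $t>0$ such that $A':=A+tv_jv_j^T$ still obeys $\Phi^{u+\delta_U}(A')\le\varepsilon_U$ and $\Phi_{l+\delta_L}(A')\le\varepsilon_L$. By the Sherman--Morrison rank-one update formula, the requirement that adding $tv_jv_j^T$ not raise $\Phi^{u+\delta_U}$ above $\Phi^u(A)$ is equivalent to $1/t\ge U_A(v_j)$ for an explicit positive quadratic form $U_A$, and the requirement that it not raise $\Phi_{l+\delta_L}$ above $\Phi_l(A)$ is equivalent to $1/t\le L_A(v_j)$ for an explicit quadratic form $L_A$. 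Averaging over $j$ by means of $\sum_jv_jv_j^T=I_N$ turns these into trace expressions: one gets $\sum_jU_A(v_j)\le\frac1{\delta_U}+\varepsilon_U$ (moving the upper barrier outward only lowers its potential), and, after a short auxiliary estimate --- integrating the scalar inequality $\frac{d}{ds}\mathrm{Tr}(A-sI)^{-1}\le\bigl(\mathrm{Tr}(A-sI)^{-1}\bigr)^2$ from $l$ to $l+\delta_L$ to bound how much the lower potential can rise under the shift --- $\sum_jL_A(v_j)\ge\frac1{\delta_L}-\varepsilon_L'$, where $\varepsilon_L':=\varepsilon_L/(1-\delta_L\varepsilon_L)$. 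Thus if the parameters are chosen with $\frac1{\delta_U}+\varepsilon_U\le\frac1{\delta_L}-\varepsilon_L'$ and $\delta_L\varepsilon_L<1$, then $\sum_jU_A(v_j)\le\sum_jL_A(v_j)$, so some index has $L_A(v_j)\ge U_A(v_j)>0$, any $t\in[1/L_A(v_j),1/U_A(v_j)]$ does the job, and finiteness of the potentials keeps the shifted barriers off the spectrum.

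Iterating the one-step lemma $q=\lceil bN\rceil$ times yields $l_qI\prec A_q\prec u_qI$ with $u_q=N/\varepsilon_U+q\delta_U$ and $l_q=-N/\varepsilon_L+q\delta_L$. If the parameters are also arranged so that $l_q>0$, then $A_q/l_q$ satisfies $I_N\preceq A_q/l_q\preceq (u_q/l_q)I_N$, and putting $s_j:=l_q^{-1}\sum_{k:\,j_k=j}t_k$ gives at most $q\le\lceil bN\rceil$ nonzero weights; unwinding the reduction with $\lambda_j=s_j/M$ then proves the theorem with $\kappa=u_q/l_q$. It remains to optimize $\varepsilon_U,\varepsilon_L,\delta_U,\delta_L$ --- there is one harmless rescaling, say $\delta_L=1$, together with $q\asymp bN$ --- subject to the constraint $\frac1{\delta_U}+\varepsilon_U\le\frac1{\delta_L}-\varepsilon_L'$ so as to minimize $u_q/l_q$, and the optimum comes out exactly equal to $\frac{b+1+2\sqrt b}{b+1-2\sqrt b}$. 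The hard part will be the one-step lemma, and within it the control of the \emph{increase} of the lower potential as its barrier is pushed toward the spectrum --- the upper barrier is easy, since it is pushed away --- this asymmetry being exactly what produces the correction $\varepsilon_L\mapsto\varepsilon_L'$ and, ultimately, the precise value of the final constant; squeezing that value down to $\frac{b+1+2\sqrt b}{b+1-2\sqrt b}$ in the concluding optimization is the second delicate point.
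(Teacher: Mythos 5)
First, a point of context: the paper does not prove this statement at all --- it is quoted verbatim as \cite[Theorem 3.1]{BSS}, so there is no internal proof to compare with; what you have written is essentially a reconstruction of the original Batson--Spielman--Srivastava barrier-potential argument. Your reduction step is correct and is the same translation the paper itself uses elsewhere (cf.\ the proof of Theorem \ref{BT1}): orthonormalize in $L_2(\Omega_M,\mu_M)$, set $\bv_j=M^{-1/2}(u_1(x^j),\dots,u_N(x^j))^T$, so that $\sum_j\bv_j\bv_j^T=I_N$, $f(x^j)^2=M\langle c,\bv_j\rangle^2$, and the theorem becomes the spectral sparsification statement $I\preceq\sum_j s_j\bv_j\bv_j^T\preceq\kappa I$ with at most $\lceil bN\rceil$ nonzero $s_j$. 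The overall barrier scheme (two potentials, one-step lemma via Sherman--Morrison, averaging against $\sum_j\bv_j\bv_j^T=I$, $\lceil bN\rceil$ iterations, final rescaling by $l_q$) is exactly the right machinery.

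The genuine gap is quantitative and sits exactly where you flag the difficulty: your lower-barrier estimate is too lossy to produce the stated constant. You bound the shifted lower potential by integrating $\frac{d}{ds}\mathrm{Tr}(A-sI)^{-1}\le(\mathrm{Tr}(A-sI)^{-1})^2$, obtaining $\sum_j L_A(\bv_j)\ge\frac1{\delta_L}-\epsilon_L'$ with $\epsilon_L'=\epsilon_L/(1-\delta_L\epsilon_L)>\epsilon_L$, and then assert that optimizing under the constraint $\frac1{\delta_U}+\epsilon_U\le\frac1{\delta_L}-\epsilon_L'$ ``comes out exactly equal'' to $\frac{b+1+2\sqrt b}{b+1-2\sqrt b}$. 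It does not. Carrying out the optimization, the final ratio is $\kappa=\frac{(1+\sqrt b)^2}{R\,(b\delta_L-1/\epsilon_L)}$ with $R$ the right-hand side of the constraint; with $R=\frac1{\delta_L}-\epsilon_L$ (the true BSS bound) the maximum of $R(b\delta_L-1/\epsilon_L)$ is $(\sqrt b-1)^2$, giving exactly $\frac{b+1+2\sqrt b}{b+1-2\sqrt b}$, but with your $R=\frac1{\delta_L}-\epsilon_L'$ the maximum of $\frac{(1-2p)(bp-1)}{p(1-p)}$ (where $p=\delta_L\epsilon_L$) is strictly smaller --- e.g.\ for $b=4$ it is about $0.53$ instead of $1$, so you get $\kappa\approx 17$ instead of $9$. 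Hence your argument proves the theorem only with a strictly worse constant (which would still serve the paper's purposes, but not the statement as written). The missing ingredient is the sharper estimate $\sum_j L_A(\bv_j)\ge\frac1{\delta_L}-\epsilon_L$ itself: writing $E_i=(\lambda_i-l-\delta_L)^{-1}$, $F_i=(\lambda_i-l)^{-1}$, the needed inequality reduces to $\sum_iE_i^2F_i\ge\delta_L\bigl(\sum_iE_iF_i\bigr)^2$, which follows from Cauchy--Schwarz together with $\delta_L\Phi_l(A)\le\delta_L\epsilon_L\le1$ (a condition already implied by the constraint). With that, the choice $\delta_L=1$, $\epsilon_L=1/\sqrt b$, $\delta_U=\frac{\sqrt b+1}{\sqrt b-1}$, $\epsilon_U=\frac{\sqrt b-1}{b+\sqrt b}$ meets the constraint with equality and yields the exact constant. (Two smaller points to tidy up: the averaging argument should select an index with $\bv_j\ne0$, which is easy since zero vectors contribute $0$ to both sums; and the one-step lemma must also record that the shifted barriers stay strictly off the spectrum, which is where the condition $\delta_L\epsilon_L\le1$ is used again.)
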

As observed  in \cite [Theorem 2.13]{DPTT},  this last theorem with  a general probability space $(\Omega,  \mu)$ in place of the discrete space  $(\Omega_M, \mu_M)$ remains true (with other constant in the right hand side) if $X_N\subset L_4(\Omega,\mu)$.  It was proved in \cite{DPSTT2}  that the additional assumption $X_N\subset L_4(\Omega,\mu)$ can be dropped as well.
\begin{Theorem}[{\cite[Theorem 6.3]{DPSTT2}}]\label{DPSTT} If $X_N$ is an $N$-dimensional subspace of the real $L_2(\Omega,\mu)$, then for any $b\in (1,2]$, there exist a set of $m\leq    \lceil bN \rceil$ points $\xi^1,\ldots, \xi^m\in\Omega$ and a set of nonnegative  weights $\lambda_j$, $j=1,\ldots, m$,  such that
\[ \|f\|_2^2\leq  \sum_{j=1}^m \lambda_j f(\xi^j)^2 \leq  \frac C{(b-1)^2}  \|f\|_2^2,\  \ \forall f\in X_N,\]
where $C>1$ is an absolute constant.
\end{Theorem}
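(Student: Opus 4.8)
The plan is to reduce the general $L_2(\Omega,\mu)$ problem to a finite one, where the Batson--Spielman--Srivastava-type result is available, the point being that this reduction costs a loss factor arbitrarily close to $1$. Fix an orthonormal basis $u_1,\dots,u_N$ of $X_N$ in $L_2(\Omega,\mu)$ and set $v(x):=(u_1(x),\dots,u_N(x))\in\R^N$, which is defined for $x\in\Omega\setminus E(X_N)$. Then $f=\sum_i c_iu_i$ satisfies $f(x)=\langle c,v(x)\rangle$ and $\|f\|_2^2=|c|^2$, while $\int_\Omega v(x)v(x)^{T}\,d\mu(x)=I_N$, the $N\times N$ identity (each entry $\int_\Omega u_iu_j\,d\mu$ is finite by the Cauchy--Schwarz inequality and equals $\delta_{ij}$).

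First I would show that $I_N$ lies in the closed convex hull, inside the space $\mathcal S_N$ of real symmetric $N\times N$ matrices, of the set $S:=\{v(x)v(x)^{T}:x\in\Omega\setminus E(X_N)\}$. If it did not, then by the separation theorem there would be $A\in\mathcal S_N$ and $c\in\R$ with $v(x)^{T}Av(x)=\langle A,v(x)v(x)^{T}\rangle\le c<\langle A,I_N\rangle$ for $\mu$-a.e. $x$; integrating against $\mu$ and using $\int_\Omega v(x)v(x)^{T}\,d\mu=I_N$ gives $\langle A,I_N\rangle\le c<\langle A,I_N\rangle$, a contradiction. Consequently, for every $\e\in(0,1)$ there is a finite convex combination $P:=\sum_{j=1}^{M}\alpha_j v(x^j)v(x^j)^{T}$ (with $\alpha_j>0$, $\sum_j\alpha_j=1$, $x^j\in\Omega\setminus E(X_N)$, and $M$ possibly large but irrelevant below) with $\|I_N-P\|\le\e$ in the operator norm, equivalently $(1-\e)I_N\preceq P\preceq(1+\e)I_N$, equivalently
\[(1-\e)\,\|f\|_2^2\le\sum_{j=1}^{M}\alpha_j f(x^j)^2\le(1+\e)\,\|f\|_2^2,\qquad \forall f\in X_N.\]
Fix $\e=1/2$. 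Then $\Omega_M:=\{x^j\}_{j=1}^M$ with $\mu_M(x^j):=\alpha_j$ is a probability space; since $\e<1$, the restriction map $f\mapsto f|_{\Omega_M}$ is injective on $X_N$, so $Y_N:=X_N|_{\Omega_M}$ is $N$-dimensional, and trivially $Y_N\subset L_4(\Omega_M,\mu_M)$ since $\Omega_M$ is finite.

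Next I would apply Theorem \ref{BSS}, in its general-probability-space form valid under the $L_4$ assumption recorded after it (alternatively: approximating the $\alpha_j$ by rationals and duplicating points reduces matters to equal weights and Theorem \ref{BSS} verbatim), to the subspace $Y_N$ on $(\Omega_M,\mu_M)$. For the given $b\in(1,2]$ this produces weights $\lambda_1,\dots,\lambda_M\ge0$ with $|\{j:\lambda_j\ne 0\}|\le\lceil bN\rceil$ such that, for every $f\in X_N$,
\[\sum_{j=1}^{M}\alpha_j f(x^j)^2\le\sum_{j=1}^{M}\lambda_j f(x^j)^2\le D(b)\sum_{j=1}^{M}\alpha_j f(x^j)^2,\]
where $D(b)$ is the resulting constant (for the finite space, Theorem \ref{BSS} gives $D(b)=\big((\sqrt b+1)/(\sqrt b-1)\big)^2$). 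Combining this with the previous display, discarding the zero weights, and rescaling by $1/(1-\e)=2$, I obtain a set of $m\le\lceil bN\rceil$ points $\xi^1,\dots,\xi^m\in\Omega$ and weights $\lambda_j'\ge0$ with $\|f\|_2^2\le\sum_{j=1}^m\lambda_j' f(\xi^j)^2\le 3D(b)\,\|f\|_2^2$ for all $f\in X_N$. Finally I would check that $3D(b)\le C/(b-1)^2$ on $(1,2]$: from $\sqrt b-1=(b-1)/(\sqrt b+1)\ge(b-1)/(1+\sqrt2)$ and $\sqrt b+1\le1+\sqrt2$ one gets $\big((\sqrt b+1)/(\sqrt b-1)\big)^2\le(1+\sqrt2)^4/(b-1)^2$, which has the required shape.

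The main obstacle is the first step: showing that $I_N$ is in the closed convex hull of $S$, hence that the problem over an arbitrary probability measure is approximable by a finitely supported one with loss as close to $1$ as desired. This is exactly the place where the $L_4$ hypothesis is dispensed with — only $u_iu_j\in L_1$ is used — and once the setting is finite the $L_4$ requirement of the Batson--Spielman--Srivastava-type theorem is met automatically. The remaining steps only involve tracking absolute constants.
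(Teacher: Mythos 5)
Your proof is correct, but it is worth noting that the paper you are being compared against does not actually prove this statement: Theorem \ref{DPSTT} is quoted there as Theorem 6.3 of \cite{DPSTT2}, and the route indicated for it (and imitated in the paper for the complex analog, Theorem \ref{BT3}) is to derive it from Theorem \ref{BSS} together with Theorem \ref{IT2}, i.e.\ first pass from $(\Omega,\mu)$ to a finite point set by a change-of-density argument plus the concentration-based Marcinkiewicz-type theorem with the $\log N$ factor (or, under an extra $L_4$ assumption, by Proposition \ref{P2}), and only then sparsify with the Batson--Spielman--Srivastava theorem. You replace that entire reduction by a more elementary observation: since $\int_\Omega v(x)v(x)^T\,d\mu=I_N$ with all entries $u_iu_j\in L_1$, the identity lies in the closed convex hull of $\{v(x)v(x)^T\}$ (separation in the finite-dimensional space of symmetric matrices), so finitely many points with convex weights give a $(1\pm\e)$-isomorphic finite model of the $L_2$ norm on $X_N$; this needs no $L_4$ hypothesis and no probabilistic discretization, which is exactly the point of the cited theorem, and the rest is BSS plus bookkeeping of constants, which you carry out correctly (including $3\bigl((\sqrt b+1)/(\sqrt b-1)\bigr)^2\le C(b-1)^{-2}$ on $(1,2]$). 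The one step to write out carefully is the application of Theorem \ref{BSS} to the non-uniform finite measure: the paper's formulation is for equal weights, so you must either use the original vector form of BSS (applied to $\sqrt{\alpha_j}\,v(x^j)$, which handles weights directly) or the rational-approximation-and-duplication trick you mention, where the perturbation of the frame bounds is controlled by the largest eigenvalue of $\sum_j v(x^j)v(x^j)^T$ and is harmless given the slack $\e=1/2$; with that made explicit, your argument is a complete and arguably simpler proof, at the cost of slightly worse absolute constants than a sharper treatment would give.
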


  In this paper we obtain analogs of Theorems \ref{BSS} and \ref{DPSTT} in the case of complex subspaces $X_N$ (see Theorems \ref{BT2}, \ref{BT3}, and Remark \ref{BRcr} in Section \ref{B}). Moreover, we provide two different proofs of Theorem \ref{ITw} -- one is based on results 
  from \cite{BSS} and the other is based on results from \cite{MSS}.
  
We note that there are related results on the Banach--Mazur distance
between two finite dimensional spaces of the same dimension (see, for instance, \cite{BLM}, \cite{Sche3}, \cite{JS}).

\section{Main lemma}
\label{A}

Results of this section are based on the following result by A. Marcus, D.A. Spielman and
N. Srivastava.

\begin{Theorem}[{\cite[Corollary 1.5 with $r=2$]{MSS}}]\label{MSS} Let a system of vectors $\bv_1,\dots,\bv_M$ from $\bbC^N$ have the following properties: for all $\bw\in \bbC^N$
\begin{equation}\label{A5}
\sum_{j=1}^M |\<\bw,\bv_j\>|^2 = \|\bw\|_2^2
\end{equation}
and for some $\epsilon>0$
\begin{equation*}
\|\bv_j\|_2^2 \le \epsilon,\qquad j=1,\dots,M.
\end{equation*}
Then there is a partition of $ \{1,2,\dots,M\}$ into two sets $S_1$ and $S_2$ such that for all $\bw\in\bbC^N$ and for each $i=1,2$
\begin{equation*}
  \sum_{j\in S_i} |\<\bw,\bv_j\>|^2 \le \frac{(1+\sqrt{2\epsilon})^2}{2}\|\bw\|_2^2.
\end{equation*}
\end{Theorem}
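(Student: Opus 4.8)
This statement is, up to notation, exactly Corollary~1.5 of \cite{MSS} with $r=2$ (their $\delta$ is our $\epsilon$, their vectors $u_i$ our $\bv_j$), so in the paper this is simply quoted, with nothing further to prove. If I had to reconstruct it, the plan would be to split it into a soft reduction, which I would do by hand, and a hard core, which I would import from \cite{MSS}.

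First I would reduce the two-block partition statement to the following one-set selection statement, which is essentially Theorem~1.4 of \cite{MSS}: \emph{if $\bw_1,\dots,\bw_M$ are independent, finitely supported random vectors in $\bbC^K$ with $\sum_j\bE\,\bw_j\bw_j^*=I_K$ and $\bE\|\bw_j\|_2^2\le\delta$ for all $j$, then with positive probability $\bigl\|\sum_j\bw_j\bw_j^*\bigr\|\le(1+\sqrt\delta)^2$.} To deduce Theorem~\ref{MSS}, I would pass to $\bbC^{2N}=\bbC^2\otimes\bbC^N$: for each $j$ let $\bw_j$ equal $\sqrt2\,(\mathbf e_1\otimes\bv_j)$ or $\sqrt2\,(\mathbf e_2\otimes\bv_j)$, each with probability $1/2$, where $\{\mathbf e_1,\mathbf e_2\}$ is the standard basis of $\bbC^2$. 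A short computation gives $\bE\,\bw_j\bw_j^*=I_2\otimes(\bv_j\bv_j^*)$, so $\sum_j\bE\,\bw_j\bw_j^*=I_2\otimes I_N=I_{2N}$ by \eqref{A5} (which says $\sum_j\bv_j\bv_j^*=I_N$), while $\|\bw_j\|_2^2=2\|\bv_j\|_2^2\le2\epsilon$ for every outcome. An outcome of this experiment is precisely a partition $\{1,\dots,M\}=S_1\sqcup S_2$, and the corresponding matrix $\sum_j\bw_j\bw_j^*$ is block diagonal with blocks $2\sum_{j\in S_i}\bv_j\bv_j^*$, $i=1,2$; its operator norm equals the larger of the two block norms. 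Hence an outcome with $\bigl\|\sum_j\bw_j\bw_j^*\bigr\|\le(1+\sqrt{2\epsilon})^2$, which exists with positive probability by the one-set statement applied with $\delta=2\epsilon$, yields $\bigl\|\sum_{j\in S_i}\bv_j\bv_j^*\bigr\|\le\frac{(1+\sqrt{2\epsilon})^2}{2}$ for $i=1,2$, which is the assertion.

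Next I would prove the one-set statement by the interlacing-polynomials method of \cite{MSS}: (i) show that the family of characteristic polynomials of $\sum_j\bw_j\bw_j^*$, indexed by the finitely many outcomes, is an interlacing family, so that for some outcome the largest root of that polynomial does not exceed the largest root of the averaged polynomial; (ii) using multilinearity in the rank-one summands, identify the averaged polynomial with the mixed characteristic polynomial of the matrices $A_j:=\bE\,\bw_j\bw_j^*$; (iii) show this polynomial is real-rooted and bound its largest root by $(1+\sqrt\delta)^2$, using the hypotheses $\sum_jA_j=I$ and $\operatorname{tr}A_j\le\delta$ in the multivariate barrier-function estimate.

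The hard part will be step (iii): the real-rootedness of the mixed characteristic polynomial together with the barrier-function/shift computation that produces precisely the bound $(1+\sqrt\delta)^2$. This is the technical core of \cite{MSS}, and I would quote it rather than reprove it; so the only part genuinely internal to the present paper is the routine reduction of the second paragraph above.
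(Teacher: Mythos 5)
Your treatment is exactly right: the paper quotes this result verbatim from \cite{MSS} (Corollary 1.5 with $r=2$) and offers no proof of its own, so there is nothing internal to verify. Your reconstruction of how \cite{MSS} deduce it from their Theorem 1.4 --- the random choice $\bw_j=\sqrt2\,(\mathbf e_k\otimes\bv_j)$ in $\bbC^{2}\otimes\bbC^N$, giving $\sum_j\bE\,\bw_j\bw_j^*=I_{2N}$, $\|\bw_j\|_2^2\le2\epsilon$, and the block-diagonal identification of outcomes with partitions --- is precisely their argument, with the interlacing-families/mixed-characteristic-polynomial core correctly left as imported.
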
 

The following Lemma \ref{NOUL2} was derived  from Theorem \ref{MSS} in \cite{NOU}   (also see  \cite[Lemma 10.22, p.105]{OU}). 

\begin{Lemma}[{\cite[Lemma 2]{NOU}}]\label{NOUL2} Let a system of vectors $\bv_1,\dots,\bv_M$ from $\bbC^N$ satisfy \eqref{A5} for all $\bw\in \bbC^N$
and
\begin{equation*}
\|\bv_j\|_2^2 = N/M,\qquad j=1,\dots,M.
\end{equation*}
Then there is a subset $J\subset \{1,2,\dots,M\}$ such that for all $\bw\in\bbC^N$
\begin{equation*}
c_0 \|\bw\|_2^2 \le \frac{M}{N} \sum_{j\in J} |\<\bw,\bv_j\>|^2 \le C_0\|\bw\|_2^2,
\end{equation*}
where $c_0$ and $C_0$ are some absolute positive constants. 
\end{Lemma}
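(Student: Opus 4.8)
The plan is to use Theorem \ref{MSS} iteratively to peel off a constant fraction of the index set while controlling the upper bound, and then verify that somewhere along the way the lower bound is not destroyed. Starting from the system $\bv_1,\dots,\bv_M$ satisfying the isotropy condition \eqref{A5} with $\|\bv_j\|_2^2 = N/M$, I would first dispose of a trivial case: if $N/M$ is already bounded below by an absolute constant (equivalently $M \le cN$), then $J = \{1,\dots,M\}$ works with $c_0 = C_0 = 1$ after rescaling, so assume $N/M$ is small. Apply Theorem \ref{MSS} with $\epsilon = N/M$: it splits $\{1,\dots,M\}$ into $S_1, S_2$, each giving an operator bounded above by $\tfrac{(1+\sqrt{2\epsilon})^2}{2}$. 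Pick the half with at least half the total ``mass'' in the sense of $\sum_{j} \|\bv_j\|_2^2 = N$ — actually, since all $\|\bv_j\|_2^2$ are equal, just pick the half with more indices, so $|S_i| \ge M/2$, and note $\sum_{j\in S_i}|\<\bw,\bv_j\>|^2 \ge$ (something) only after we argue the lower bound survives.

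The subtlety is that Theorem \ref{MSS} only gives an \emph{upper} bound for each piece; it says nothing directly about the lower bound on an individual piece. So the right way to organize the iteration is: keep the piece $S_i$ with larger cardinality, renormalize the vectors on $S_i$ (they no longer satisfy \eqref{A5}, but the frame operator $\sum_{j\in S_i}\bv_j\bv_j^*$ is a positive operator with $\|\sum_{j\in S_i}\bv_j\bv_j^*\| \le \tfrac{(1+\sqrt{2\epsilon})^2}{2}$), and iterate. After $k$ steps we have a set $J_k$ with $|J_k| \ge M/2^k$ and $\sum_{j\in J_k}\bv_j\bv_j^* \preceq A_k \cdot I$ where $A_k$ is a product of factors $\tfrac{(1+\sqrt{2\epsilon_\ell})^2}{2}$ with $\epsilon_\ell$ roughly doubling each step. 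Since $\sum_\ell \sqrt{\epsilon_\ell}$ converges geometrically (because $\epsilon_\ell \approx 2^\ell N/M$ and we stop once $\epsilon_\ell \asymp 1$), the product $\prod_\ell \tfrac{(1+\sqrt{2\epsilon_\ell})^2}{2}$ telescopes to $\asymp (1/2)^k$ times an absolute constant; hence $A_k \le C_0 \cdot \tfrac{N}{M}\cdot\tfrac{M}{2^k}\cdot 2^{k}/M$... more cleanly, $\tfrac{M}{N}A_k$ stays bounded by an absolute constant $C_0$. That gives the upper bound in the conclusion.

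For the lower bound, I would run the iteration until $|J_k| \asymp N$, i.e. stop when $2^k \asymp M/N$, equivalently when $\epsilon_k \asymp 1$, and then invoke a trace/averaging argument: $\operatorname{tr}\big(\sum_{j\in J_k}\bv_j\bv_j^*\big) = |J_k|\cdot\tfrac{N}{M} \asymp N\cdot\tfrac{N}{M}$ is bounded below, so the average eigenvalue of $\tfrac{M}{N}\sum_{j\in J_k}\bv_j\bv_j^*$ is an absolute constant, while the top eigenvalue is at most $C_0$; a sum of $N$ eigenvalues each $\le C_0$ with average $\ge c$ forces a definite fraction of them to be $\ge c/2$ — but this only bounds most eigenvalues from below, not all. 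To get \emph{all} eigenvalues bounded below one more idea is needed: one applies the same Theorem \ref{MSS}-based splitting but now viewed through the lens of Lemma-type control on \emph{both} ends, or simply invokes the fact (standard in this circle of ideas, and the reason the authors cite \cite{NOU}) that once the operator norm is bounded, a \emph{further} selection restoring the lower bound can be made — indeed, the one-sided Theorem \ref{MSS} applied to $\tfrac{M}{N}\sum_{j\in J_k}\bv_j\bv_j^* - \tfrac12 I$ (or to the complementary picture) controls how much mass can concentrate, forcing $\sum_{j\in J_k}\bv_j\bv_j^* \succeq c_0 \tfrac{N}{M} I$ on the surviving set.

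\textbf{Main obstacle.} The genuinely delicate point is the lower bound: Theorem \ref{MSS} is a one-sided (upper) spectral estimate, so extracting a two-sided frame inequality requires pairing the upper-bound iteration with a complementary argument — either tracking $\operatorname{tr}$ against $\|\cdot\|$ throughout, or applying the splitting theorem a second time to the ``deficiency'' operator — and making sure the set that is good for the upper bound is simultaneously good for the lower bound. Balancing these two requirements while keeping $|J|$ of order $N$ (rather than losing a logarithmic factor, as in the weaker Theorems \ref{Rud} and \ref{IT2}) is exactly where the strength of the Marcus–Spielman–Srivastava result is essential, and is the crux of the derivation in \cite{NOU}.
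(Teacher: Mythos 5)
Your skeleton — iterated halving via Theorem \ref{MSS}, stopping once the upper bound has been driven down to the scale $N/M$ so that multiplying by $M/N$ leaves absolute constants — is indeed the shape of the actual argument (it is how the paper proves the stronger Lemma \ref{Lim}, of which Lemma \ref{NOUL2} is the case $\theta=1$, following \cite{NOU}). But there is a genuine gap at exactly the point you flag and do not resolve: the lower bound. The device that closes it is not a trace/averaging argument (which, as you admit, controls most eigenvalues but not all), and it is not an application of Theorem \ref{MSS} to $\frac{M}{N}\sum_j \bv_j\bv_j^* - \frac12 I$ — that object is not the frame operator of a system of vectors (it is generally indefinite), so the theorem simply does not apply to it. The correct mechanism is elementary but must be built into every step: the partition produced at each stage has two pieces, \emph{both} obey the MSS upper bound, and hence each piece also inherits a \emph{lower} bound by subtracting the upper bound of the complementary piece from the current two-sided frame bounds of the set being split. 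Packaged together with the renormalization issue you gloss over (after the first split the retained vectors are no longer a tight frame, so Theorem \ref{MSS} cannot be re-applied verbatim), this is precisely Proposition \ref{AP1} (Corollary B of \cite{NOU}): if the current set has frame bounds $\alpha\le\beta$ and $\|\bv_j\|_2^2\le\delta<\alpha$, each half of the partition has bounds $\frac{1-5\sqrt{\delta/\alpha}}{2}\alpha$ and $\frac{1+5\sqrt{\delta/\alpha}}{2}\beta$. The lower bound has to be carried along at every iteration; trying to restore it only at the end, as your proposal suggests, fails because by the stopping time one only knows $\beta/\alpha\le C$ for some absolute constant $C$, which need not be close enough to $1$ for a one-shot subtraction to yield a positive lower bound.

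The second missing ingredient is the bookkeeping that shows the iteration can be continued and stopped at the right moment: the recursion $\alpha_{j+1}=\alpha_j\frac{1-5\sqrt{\delta/\alpha_j}}{2}$, $\beta_{j+1}=\beta_j\frac{1+5\sqrt{\delta/\alpha_j}}{2}$ must keep $\alpha_j\ge 100\delta$ (so that Proposition \ref{AP1} stays applicable and the lower factors stay positive) up to a step $L$ with $25\delta\le\alpha_{L+1}<100\delta$ and, crucially, $\beta_{L+1}\le C\alpha_{L+1}$. This is Lemma \ref{AL1} (Lemma 1 of \cite{NOU}); it replaces your heuristic ``the product telescopes'' computation and is what makes both final bounds comparable to $\delta=N/M$, so that scaling by $M/N$ gives absolute constants $c_0$ and $C_0$. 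A minor further point: which half you keep is immaterial for Lemma \ref{NOUL2} (the paper keeps the smaller half only to obtain the cardinality bound $|J|\le C_1\theta N$ in Lemma \ref{Lim}; for Lemma \ref{NOUL2} no cardinality is claimed, and the bound $c_0N\le|J|\le C_0N$ follows a posteriori by taking traces, cf. Remark \ref{R4.1}), but without the two-sided per-piece estimate of Proposition \ref{AP1} and the recursion control of Lemma \ref{AL1} your argument does not yield the claimed lower bound.
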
 

Lemma \ref{NOUL2} does not   control the cardinality of the set $J$, which we need for applications in discretization.  The following simple remark was made in \cite{VT158}. 

\begin{Remark}[{\cite{VT158}}]\label{R4.1} For the cardinality of the subset $J$ from Lemma \ref{NOUL2} we have
$$
c_0 N \le |J| \le C_0 N.
$$
\end{Remark}

 The following lemma is the main lemma for the proof of Theorem \ref{IT4}.  

\begin{Lemma}[{Main lemma}]\label{Lim} Let a system of vectors $\bv_1,\dots,\bv_M$ from $\bbC^N$ satisfy \eqref{A5} for all $\bw\in \bbC^N$
and
\be\label{A6}
\|\bv_j\|_2^2 \le \theta N/M, \qquad  \theta \le M/N,\qquad j=1,\dots,M.
\ee
Then there is a subset $J\subset \{1,2,\dots,M\}$ such that for all $\bw\in\bbC^N$
\be\label{A7}
c_0\theta \|\bw\|_2^2 \le \frac{M}{N} \sum_{j\in J} |\<\bw,\bv_j\>|^2 \le C_0\theta\|\bw\|_2^2,\quad |J|\le C_1\theta N,
\ee
where $c_0$, $C_0$, and $C_1$ are some absolute positive constants. 
\end{Lemma}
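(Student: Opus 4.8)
The plan is to reduce the general case of Lemma~\ref{Lim} to the equal-norm case already handled by Lemma~\ref{NOUL2} (with the cardinality control from Remark~\ref{R4.1}). The obstruction to applying Lemma~\ref{NOUL2} directly is that the vectors $\bv_j$ need not have equal squared norms; they only satisfy the upper bound $\|\bv_j\|_2^2\le \theta N/M$. The natural fix is a \emph{splitting} trick: replace each $\bv_j$ by several copies of a rescaled version of itself so that every new vector has the same squared norm, and the Parseval-type identity \eqref{A5} is preserved.

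First I would set a target squared norm, say $\tau := N/\widetilde M$ where $\widetilde M$ is the total number of new vectors to be produced, and split each $\bv_j$ into $n_j$ equal pieces $\bv_j/\sqrt{n_j}$, choosing the integers $n_j$ so that $\|\bv_j\|_2^2/n_j$ is (approximately) the common value $\tau$; concretely $n_j \approx \|\bv_j\|_2^2 \widetilde M / N$. Since $\|\bv_j\|_2^2\le \theta N/M$, we get $n_j \lesssim \theta \widetilde M/M$, and summing, $\widetilde M = \sum_j n_j \lesssim \theta \widetilde M \cdot (M/M)$ — so one must pick $\widetilde M$ of order $M$ (say $\widetilde M \asymp M$, or one may take $\widetilde M$ a bit larger to absorb rounding), giving a new system $\{\widetilde\bv_k\}_{k=1}^{\widetilde M}$ in $\bbC^N$ with $\sum_k |\<\bw,\widetilde\bv_k\>|^2 = \|\bw\|_2^2$ for all $\bw$ (each group of $n_j$ copies contributes exactly $|\<\bw,\bv_j\>|^2$) and $\|\widetilde\bv_k\|_2^2 \approx N/\widetilde M$ for all $k$. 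The only delicate point is rounding: $n_j$ must be a positive integer, so $\|\bv_j\|_2^2/n_j$ is only within a constant factor of $\tau$, not exactly equal. To get \emph{exact} equality one can either (a) observe that Lemma~\ref{NOUL2} is robust to a constant-factor perturbation of the equal-norm hypothesis, or (b) pad each piece into $\bbC^{N+1}$ (or $\bbC^{N'}$ for a slightly larger $N'$) with a dummy coordinate to make the norms exactly equal, while not changing inner products $\<\bw,\bv_j\>$ for $\bw$ supported on the first $N$ coordinates — then apply Lemma~\ref{NOUL2} in the larger space and restrict back. I expect version (a), re-examining the proof of Lemma~\ref{NOUL2} (ultimately Theorem~\ref{MSS}, which already only needs an \emph{upper} bound $\|\bv_j\|_2^2\le\epsilon$), to be cleaner: indeed Theorem~\ref{MSS} itself tolerates unequal norms, so one may be able to bypass exact equality entirely and iterate Theorem~\ref{MSS} directly on $\{\widetilde\bv_k\}$.

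Next, apply Lemma~\ref{NOUL2} (with Remark~\ref{R4.1}) to $\{\widetilde\bv_k\}_{k=1}^{\widetilde M}$: there is $\widetilde J\subset\{1,\dots,\widetilde M\}$ with $c_0\|\bw\|_2^2 \le \frac{\widetilde M}{N}\sum_{k\in\widetilde J}|\<\bw,\widetilde\bv_k\>|^2 \le C_0\|\bw\|_2^2$ and $c_0 N \le |\widetilde J| \le C_0 N$. Now pull this back to the original index set: let $J := \{\, j : \text{at least one copy of } \bv_j \text{ lies in } \widetilde J\,\}$. For the upper bound in \eqref{A7}, each original $\bv_j$ contributes at most $n_j$ copies, so $\sum_{k\in\widetilde J}|\<\bw,\widetilde\bv_k\>|^2 \le \sum_{j\in J} n_j \cdot \frac{|\<\bw,\bv_j\>|^2}{n_j} = \sum_{j\in J}|\<\bw,\bv_j\>|^2$ — wait, this goes the wrong way; instead note $\sum_{k\in\widetilde J}|\<\bw,\widetilde\bv_k\>|^2 \le \sum_{j\in J}|\<\bw,\bv_j\>|^2$ requires the \emph{number} of selected copies of $\bv_j$ to be at most $n_j$, which is automatic. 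Combined with $\widetilde M\asymp M$ this gives $\frac{M}{N}\sum_{j\in J}|\<\bw,\bv_j\>|^2 \gtrsim \frac{\widetilde M}{N}\sum_{k\in\widetilde J}|\<\bw,\widetilde\bv_k\>|^2 \ge c_0\|\bw\|_2^2$, the left inequality of \eqref{A7} up to the factor $\theta$. For the right inequality and the cardinality bound: we have $|J| \le |\widetilde J| \le C_0 N$, which is even better than $C_1\theta N$; and for the \emph{upper} estimate $\frac{M}{N}\sum_{j\in J}|\<\bw,\bv_j\>|^2 \le C_0\theta\|\bw\|_2^2$ we use that for each $j\in J$ the selected copies number at least one, hence $\sum_{j\in J}|\<\bw,\bv_j\>|^2 \le \sum_{j\in J} n_j\cdot\frac{|\<\bw,\bv_j\>|^2}{n_j} = n_{\max}\sum \dots$ — more carefully, $|\<\bw,\bv_j\>|^2 = n_j |\<\bw,\widetilde\bv_k\>|^2$ for any copy $k$, and $n_j \lesssim \theta \widetilde M/M \asymp \theta$, so $\sum_{j\in J}|\<\bw,\bv_j\>|^2 \lesssim \theta \sum_{k\in\widetilde J}|\<\bw,\widetilde\bv_k\>|^2 \le \theta \frac{N}{\widetilde M}C_0\|\bw\|_2^2$, whence $\frac{M}{N}\sum_{j\in J}|\<\bw,\bv_j\>|^2 \lesssim \theta C_0\|\bw\|_2^2$. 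This is exactly the shape of \eqref{A7}, with the factor $\theta$ appearing precisely because a single selected copy of $\bv_j$ undercounts $|\<\bw,\bv_j\>|^2$ by the factor $n_j\asymp\theta$.

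The main obstacle, as noted, is handling the rounding in the choice of the multiplicities $n_j$ cleanly while keeping \eqref{A5} an exact identity for the new system — this is what forces a short separate argument (padding coordinates, or invoking the robustness of Theorem~\ref{MSS} to unequal norms). Everything else is bookkeeping: tracking how the normalizing factor $\frac{\widetilde M}{N}$ converts to $\frac{M}{N}$ (they differ by an absolute constant) and how selecting one copy per retained index produces the $\theta$ factor on both sides of \eqref{A7}. One should also check the constraint $\theta\le M/N$ is what guarantees $n_j\ge 1$ is achievable (i.e. that there is room to split), so that the construction is non-vacuous.
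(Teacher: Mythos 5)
Your splitting trick does appear in the paper, but in the opposite direction: it is how Corollary \ref{weighted} is \emph{deduced from} Lemma \ref{Lim} (reduce to $\theta=2$, then apply the lemma). The paper proves Lemma \ref{Lim} itself directly, by a Lunin-type iteration of the halving result Proposition \ref{AP1} (from \cite{NOU}) together with the bookkeeping Lemma \ref{AL1}, keeping at each step the half of smaller cardinality; that is where both the bound $|J|\le C_1\theta N$ and the factor $\theta$ in \eqref{A7} come from. Your reduction has two genuine gaps. First, it does not produce the lower bound $c_0\theta\|\bw\|_2^2$: pulling back from $\widetilde J$ you can only say that the selected copies of $\bv_j$ number at most $n_j$, which gives $\sum_{j\in J}|\<\bw,\bv_j\>|^2\ge\sum_{k\in\widetilde J}|\<\bw,\widetilde\bv_k\>|^2\ge c_0\tfrac{N}{\widetilde M}\|\bw\|_2^2$, i.e.\ only $c\|\bw\|_2^2$ on the left of \eqref{A7}; the "one copy per retained index" argument gains the factor $n_j$ only as an \emph{upper} bound ($n_j\lesssim\theta$), while on the lower side $\min_j n_j$ can equal $1$ however large $\theta$ is (vectors of small norm are not split). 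This loss is not cosmetic: the $\theta$ on the left of \eqref{A7} is exactly what cancels $m\le C_1\theta N$ and yields the absolute constant $C_2$ in Theorems \ref{BT1} and \ref{IT4}; your version would only give a lower constant of order $t^{-2}$.

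Second, the reduction never actually reaches the hypotheses of Lemma \ref{NOUL2}: after rounding, the split system satisfies only the upper bound $\|\widetilde\bv_k\|_2^2\le 2N/\widetilde M$, i.e.\ condition \eqref{A6} with $\theta=2$, not the exact equality $\|\widetilde\bv_k\|_2^2=N/\widetilde M$. Your fix (a) — a "robust" Lemma \ref{NOUL2} tolerating a constant-factor perturbation, \emph{with} the cardinality control of Remark \ref{R4.1} — is precisely Lemma \ref{Lim} for bounded $\theta$, i.e.\ the statement to be proved: note that Remark \ref{R4.1} derives $|J|\le C_0N$ by summing \eqref{A7} over an orthonormal basis and using the exact norm equality, and under a mere upper bound on the norms this argument gives only a lower bound on $|J|$ (a selected set could contain many vectors of tiny norm), so the cardinality bound must be tracked through the iteration itself — which is the content of the paper's proof. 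Your fix (b), padding with dummy coordinates, is also not routine: to preserve \eqref{A5} in the enlarged space you must append new columns that are orthonormal, orthogonal to the existing $N$ columns, and have prescribed row norms — a nontrivial completion problem (with a single extra coordinate it is in general not solvable), which the proposal does not address. So as written the argument proves a weaker statement and defers the essential work to an unproven "robustness" claim.
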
 

\begin{Remark}\label{LimR} The proof of Lemma \ref{Lim}  gives a slightly stronger result than Lemma \ref{Lim} -- the tight frame condition (\ref{A5}) can be replaced by a frame condition
$$
A\|\bw\|_2^2 \le\sum_{j=1}^M |\<\bw,\bv_j\>|^2 \le B \|\bw\|_2^2,\quad 0<A\le B<\infty.
$$
\end{Remark}
This stronger version of Lemma \ref{Lim} was used in the followup paper \cite{NSU} for sampling recovery.

To derive a complex case analog of Theorem \ref{BSS} (i.e., Theorem \ref{BT2}) from results in \cite{MSS}   we need  to prove the following corollary.

\begin{Corollary}\label{weighted} Let a system of vectors $\bv_1,\dots,\bv_M$ from $\bbC^N$ satisfy \eqref{A5} for all $\bw\in \bbC^N$.
Then there exists a set of weights $\lambda_j\ge 0$, $j=1,\dots, M$, such that $|\{j: \lambda_j\neq 0\}| \le  2C_1N $ and for all $\bw\in\bbC^N$ we have 
\begin{equation*}
c_0\|\bw\|_2^2 \le \sum_{j=1}^M \lambda_j|\<\bw, \bv_j\>|^2 \le C_0\|\bw\|_2^2.
\end{equation*}
 where $c_0$, $C_0$, and $C_1$ are absolute positive constants from Lemma \ref{Lim}.
\end{Corollary}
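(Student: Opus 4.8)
The plan is to reduce Corollary~\ref{weighted} to the Main Lemma (Lemma~\ref{Lim}) by a dyadic decomposition of the vectors according to their norms, applying Lemma~\ref{Lim} separately on each dyadic block and then assembling the resulting weighted quadrature rules. First I would dispose of the vectors that are ``too big'': since $\sum_{j=1}^M\|\bv_j\|_2^2=\operatorname{tr}(\sum_j \bv_j\bv_j^*)=N$ by the tight frame condition \eqref{A5}, at most $N$ indices $j$ can satisfy $\|\bv_j\|_2^2\ge 1$, and in fact the number of indices with $\|\bv_j\|_2^2 \ge N/M$ (i.e.\ ``above average'') is at most $M$, which is useless, so the right cutoff is $\|\bv_j\|_2^2 \ge c/N$ for a suitable absolute constant. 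Actually the cleanest route: split $\{1,\dots,M\}$ into blocks $J_k=\{j: 2^{-k-1}< M\|\bv_j\|_2^2/N \le 2^{-k}\}$ for $k\ge 0$, plus the ``large'' block $J_{-1}=\{j: \|\bv_j\|_2^2 > N/M\}$. On $J_{-1}$ one simply keeps all points with weight zero is not allowed --- instead one notes $|J_{-1}|$ could be large, so instead I would handle large vectors by the trivial observation that one can always add a single vector to the system.

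The better organization is the following. Set $a_j:=\|\bv_j\|_2^2$ and normalize $\tilde\bv_j:=\bv_j/\sqrt{a_j}$ (discarding $\bv_j=0$, which contribute nothing). For a threshold to be chosen, split the index set by the dyadic value of $a_j$. On each dyadic group $G_k$ the vectors $\bv_j$, $j\in G_k$, after rescaling by $M/N$ times an appropriate constant, form (a piece of) a system to which Lemma~\ref{Lim} applies with $\theta$ of order $1$: indeed within $G_k$ all norms are comparable, so $\|\bv_j\|_2^2 \le \theta_k N/|G_k|$ with $\theta_k=O(1)$ provided $|G_k|\ge $ (const)$N$; for groups with fewer than $CN$ indices we can simply keep \emph{every} point in the group with its natural weight, since there are at most $CN$ of them. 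Here the constraint $\theta_k\le M/N$ from \eqref{A6} is automatic after replacing $M$ by $|G_k|$. Lemma~\ref{Lim} then gives, for each large group $G_k$, a subset $J_k\subset G_k$ with $|J_k|\le C_1 N$ and weights (all equal to $\frac{|G_k|}{N c}$ for the appropriate constant, or just $1/|J_k|$ scaled) reproducing $\sum_{j\in G_k}|\langle\bw,\bv_j\rangle|^2$ up to absolute constants.

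The key step is then to sum over $k$: the partial frame operators $\sum_{j\in G_k}\bv_j\bv_j^*$ add up to the identity (by \eqref{A5}), and on each $G_k$ we have a weighted operator $T_k:=\sum_{j\in J_k}\lambda_j^{(k)}\bv_j\bv_j^*$ with $c_0 \sum_{j\in G_k}\bv_j\bv_j^* \preceq T_k \preceq C_0\sum_{j\in G_k}\bv_j\bv_j^*$ in the Loewner order; hence $\sum_k T_k$ satisfies $c_0 I \preceq \sum_k T_k \preceq C_0 I$, which is exactly the two-sided bound $c_0\|\bw\|_2^2 \le \sum_j\lambda_j|\langle\bw,\bv_j\rangle|^2 \le C_0\|\bw\|_2^2$ claimed. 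The only remaining issue is the cardinality $|\{j:\lambda_j\ne 0\}|=\sum_k |J_k|$. This is where the factor $2C_1N$ in the statement comes from and is the main obstacle: a priori there are infinitely many dyadic scales $k$, so we must truncate. The point is that scales with very small $a_j$ (say $M a_j/N \le 1/N$, equivalently $a_j \le 1/N^2$ after normalizing, or more precisely the tail where $a_j$ is below some absolute-constant multiple of $1/N$) contribute total frame-operator mass $\sum_{j\in\text{tail}} a_j \le$ (small), so \emph{dropping} the tail entirely only perturbs the frame operator by at most $\tfrac12 I$ (adjusting constants), leaving finitely many scales $k=0,1,\dots,K$ with $K=O(\log N)$; but then $\sum_k|J_k| = O(N\log N)$, not $O(N)$. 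To get the sharp $O(N)$ I would instead argue more carefully: in each dyadic group $G_k$ with $a_j \approx 2^{-k}N/M$, we have $|G_k| \cdot 2^{-k}N/M \approx \sum_{j\in G_k}a_j =: m_k$, and $\sum_k m_k = N$; Lemma~\ref{Lim} applied to $G_k$ actually yields $|J_k| \le C_1\theta_k |G_k|/|G_k|\cdot\ldots$ — here one should apply Lemma~\ref{Lim} with the ``$N$'' being the effective dimension of $\operatorname{span}\{\bv_j:j\in G_k\}$, which is $\le N$ but more usefully one uses the frame bound version (Remark~\ref{LimR}) with $B\approx m_k$, giving $|J_k|\le C_1 m_k N/(\text{something})$; summing $\sum_k |J_k|$ telescopes against $\sum_k m_k = N$ to give $|J_k|$ totalling $O(N)$. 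I expect reconciling this counting — getting the clean bound $2C_1 N$ rather than $O(N\log N)$ — to require the frame-condition refinement of Lemma~\ref{Lim} noted in Remark~\ref{LimR}, applied to the subsystem on each dyadic block with $A,B$ proportional to the block mass $m_k$, and this is the step I would spend the most care on.
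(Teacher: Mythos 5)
Your plan has a genuine gap at exactly the step you flag as delicate, and it is not repairable in the form you describe. The Loewner sandwich you need on each dyadic block, $c_0\sum_{j\in G_k}\bv_j\bv_j^* \preceq T_k \preceq C_0\sum_{j\in G_k}\bv_j\bv_j^*$, is \emph{not} what Lemma \ref{Lim} (or its frame variant in Remark \ref{LimR}) provides. The lemma's conclusion bounds the selected sum by absolute multiples of $\|\bw\|_2^2$, i.e.\ relative to the identity, and its hypothesis requires the input system to be a frame of all of $\bbC^N$ with a strictly positive lower bound; a single dyadic block $G_k$ will in general span only a proper subspace (or have tiny lower frame bound), so the lemma simply does not apply block by block, and a sparsification of a block \emph{relative to the block's own frame operator} is a BSS-type statement --- essentially the result you are trying to prove, so invoking it is circular. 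Independently, the cardinality count fails: Lemma \ref{Lim} gives $|J_k|\le C_1\theta_k N$ with the ambient dimension $N$ for every block (and your ``small'' blocks contribute up to $CN$ points each), so summing over the $O(\log)$ many dyadic scales yields $O(N\log N)$, not $2C_1N$. The telescoping you hope for, with $|J_k|$ proportional to the block mass $m_k$, is not supported by the lemma or by Remark \ref{LimR}: the remark relaxes the tight-frame hypothesis to a frame condition on the \emph{whole} system, it does not make the cardinality bound scale with trace mass.

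The paper's proof avoids decomposition altogether by a replication trick that makes all norms comparable in one stroke: with $\|\bv_1\|_2=\min_j\|\bv_j\|_2$, choose integers $n_j$ with $\|\bv_1\|_2^2\le \|\bv_j\|_2^2/n_j<2\|\bv_1\|_2^2$ and replace each $\bv_j$ by $n_j$ copies of $\bv_j/\sqrt{n_j}$. The new system of $M'=\sum_j n_j$ vectors is still a tight frame (the copies recombine to give $|\<\bw,\bv_j\>|^2$), its total squared norm is $N$, whence $\|\bv_1\|_2^2\le N/M'$ and every new vector has squared norm $<2N/M'$; so condition \eqref{A6} holds with $\theta=2$ and a \emph{single} application of Lemma \ref{Lim} produces a subset of at most $2C_1N$ copies. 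Collapsing the selected copies back onto the original indices yields the nonnegative weights $\lambda_j$ with at most $2C_1N$ of them nonzero and the two-sided bound with the constants $c_0$, $C_0$ of the lemma. If you want to salvage your outline, this replication step is the missing idea: it converts the unequal-norm system into one with norms equal up to a factor $2$, which is precisely what lets the lemma be used once, with $\theta=2$, instead of once per scale.
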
 
\begin{proof}
Without loss of generality we assume that $\|\bv_1\|_2=\min\limits_{j=1,\dots, M}\|\bv_j\|_2$. Let $n_1,\dots, n_M$ be natural numbers such that for every $j$, $1\leq j\leq M$,
\be\label{v_j_norm}
\|\bv_1\|_2^2\leq \frac{\|\bv_j\|_2^2}{n_j}<2\|\bv_1\|_2^2.
\ee
Denote  
\be\label{M'}
M'=\sum\limits_{j=1}^{M} n_j.
\ee
 We build a system $V$ of vectors $\bv_1',\dots, \bv_{M'}'$ from $\bbC^N$ in the following way: for every $j$, $1\leq j\leq M$, we include in $V$ $n_j$ copies of the vector $\bv_j/\sqrt{n_j}$.  
Let us check that $V$ satisfies \eqref{A5} and \eqref{A6} with $\theta=2$. By construction and by our assumption that the system of vectors  $\bv_1,\dots,\bv_M$ satisfies \eqref{A5}, we have
\be\label{A5'}
\sum_{j=1}^{M'} |\<\bw,\bv_j'\>|^2 =\sum_{j=1}^M n_j|\<\bw,\bv_j/\sqrt{n_j}\>|^2 = \|\bw\|_2^2.
\ee
 By construction of the system $V$   we obtain from \eqref{v_j_norm} and \eqref{M'} that 
 \be\label{2_4_2_5}
\|\bv_1\|_2^2 M'\leq \sum_{j=1}^{M} n_j\frac{\|\bv_j\|_2^2}{n_j}=\sum_{j=1}^{M'} \|\bv_j'\|_2^2.
\ee 
Let $e_i$, $i=1,\dots, N$, be the canonical basis of $\bbC^N$. Then from \eqref{A5'} we obtain
\be\label{eqN}
\sum_{j=1}^{M'} \|\bv_j'\|_2^2=
\sum_{j=1}^{M'} \sum_{i=1}^{N}|\< e_i, \bv_j'\>|^2= \sum_{i=1}^{N}\sum_{j=1}^{M'}|\< e_i, \bv_j'\>|^2=\sum_{i=1}^{N}\|e_i\|^2=N.
\ee
Thus, from \eqref{2_4_2_5} and \eqref{eqN} we have $\|\bv_1\|_2^2\leq N/M'$.
 By construction for each $j=1,\dots, M'$, there is a number $k(j)\in\{1,\dots, M\}$ such that 
 $\bv_j'=\bv_{k(j)}/\sqrt{n_{k(j)}}$. Therefore, by \eqref{v_j_norm} we get
$$
\|\bv_j'\|_2^2=\frac{\|\bv_{k(j)}\|_2^2}{n_{k(j)}}<2\|\bv_1\|_2^2\leq 2\frac{N}{M'}, \qquad  j= 1,\dots, M'.
$$
The above inequality implies that the system $V$ satisfies condition (\ref{A6}) and 
equality (\ref{A5'}) implies condition (\ref{A5}). 
We apply Lemma \ref{Lim} to the system $V$ and obtain a subset 
$J\subset \{1,\dots, M'\}$ with $|J|\le 2C_1 N$ such that for all $\bw \in \bbC^N$
$$
c_0 \|\bw\|_2^2 \le \frac{M'}{2N} \sum_{j\in J} |\<\bw,\bv_j'\>|^2 \le C_0\|\bw\|_2^2.
$$
It is clear that 
$$
\frac{M'}{2N} \sum_{j\in J} |\<\bw,\bv_j'\>|^2=
\sum_{j=1}^M \lambda_j|\<\bw, \bv_j\>|^2 
$$
for some nonnegative $\lambda_j, j=1,\dots, M$, so that $|\{j: \lambda_j\neq 0\}| \le  2C_1N $.
\end{proof}

Note that condition (\ref{A5}) implies that $M\ge N$. Lemma \ref{Lim} in some sense improves the celebrated result of M. Rudelson \cite{Rud} where a result similar to Lemma \ref{Lim} was proved with  $|J| \le C_1(t) N \log N$ and with bounds depending on $\epsilon$ (see Theorem \ref{Rud} in Introduction). Proof of Lemma \ref{Lim} uses the iteration method suggested by 
A. Lunin \cite{Lun}.  We also refer the reader to the papers \cite{Ka}, \cite{KL}, \cite{L20} for a discussion of 
recent outstanding progress in the area of submatrices of orthogonal matrices. 
\begin{proof}[{Proof of Lemma \ref{Lim}}]
We use the following known results (for Proposition \ref{AP1} see Corollary B from \cite{NOU}, Corollary 10.19 from \cite{OU}, p.104, or \cite{HO},  and for Lemma \ref{AL1} see Lemma 1 in \cite{NOU} or Lemma 10.20 in \cite{OU}, p.104).

\begin{Proposition}[{\cite[Corollary B]{NOU}}]\label{AP1}
Let $\bv_1,\dots,\bv_M\in\bbC^N$ and $\delta>0$ be such that $\|\bv_j\|_2^2\le\delta$ for all $j=1,\dots,M$. If	
\begin{equation*}
\alpha\|\bw\|_2^2\le\sum\limits_{j=1}^M|\<\bw, \bv_j\>|^2\le \beta\|\bw\|_2^2, \qquad \forall\bw\in\bbC^N, 
\end{equation*}
with some numbers $\beta\ge\alpha>\delta$, then there exists a partition of $\{1,\dots, M\}$ into $S_1$ and $S_2$ such that for each $i=1,2$:
\begin{equation*}
\frac{1-5\sqrt{\delta/\alpha}}{2}\alpha\|\bw\|_2^2\le\sum\limits_{j\in S_i}|\<\bw, \bv_j\>|^2\le \frac{1+5\sqrt{\delta/\alpha}}{2}\beta\|\bw\|_2^2, \quad\forall\bw\in\bbC^N.
\end{equation*}
\end{Proposition}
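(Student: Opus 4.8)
The plan is to deduce Proposition \ref{AP1} from the tight-frame statement of Theorem \ref{MSS} by first \emph{whitening} the given frame into a Parseval frame, and then recovering the two-sided estimate by pairing the upper bound of Theorem \ref{MSS} on each block of the partition with the complementary identity furnished by tightness. The constant manipulations at the end will be routine.

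First I would form the frame operator $F:=\sum_{j=1}^M\bv_j\bv_j^*$ on $\bbC^N$, for which $\sum_{j=1}^M|\<\bw,\bv_j\>|^2=\<F\bw,\bw\>$ for every $\bw$. The hypothesis is exactly $\alpha I\preceq F\preceq\beta I$, so $F$ is invertible with positive square root $F^{1/2}$. I would then set $\tilde\bv_j:=F^{-1/2}\bv_j$. Using self-adjointness of $F^{-1/2}$, one computes $\sum_{j=1}^M|\<\bw,\tilde\bv_j\>|^2=\<F^{-1/2}FF^{-1/2}\bw,\bw\>=\|\bw\|_2^2$, so $\{\tilde\bv_j\}$ is a Parseval frame, and $\|\tilde\bv_j\|_2^2=\<F^{-1}\bv_j,\bv_j\>\le\alpha^{-1}\|\bv_j\|_2^2\le\delta/\alpha$, since $F^{-1}\preceq\alpha^{-1}I$. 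Thus Theorem \ref{MSS} applies to $\{\tilde\bv_j\}$ with $\epsilon:=\delta/\alpha$ (note $\epsilon<1$ because $\alpha>\delta$).

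Applying Theorem \ref{MSS} yields a partition $S_1\cup S_2=\{1,\dots,M\}$ with $\sum_{j\in S_i}|\<\bw,\tilde\bv_j\>|^2\le\frac{(1+\sqrt{2\epsilon})^2}{2}\|\bw\|_2^2$ for each $i=1,2$ and all $\bw$. Because $\{\tilde\bv_j\}$ is Parseval, the two block sums add to $\|\bw\|_2^2$, so each also satisfies the lower bound $\sum_{j\in S_i}|\<\bw,\tilde\bv_j\>|^2\ge\big(1-\frac{(1+\sqrt{2\epsilon})^2}{2}\big)\|\bw\|_2^2$. I would then transfer both inequalities to the original system through the identity $\<\bw,\bv_j\>=\<F^{1/2}\bw,\tilde\bv_j\>$: applying the block estimates with $F^{1/2}\bw$ in place of $\bw$ and using $\alpha\|\bw\|_2^2\le\|F^{1/2}\bw\|_2^2\le\beta\|\bw\|_2^2$ gives $\sum_{j\in S_i}|\<\bw,\bv_j\>|^2\le\frac{(1+\sqrt{2\epsilon})^2}{2}\beta\|\bw\|_2^2$ and, whenever the lower constant is nonnegative, $\sum_{j\in S_i}|\<\bw,\bv_j\>|^2\ge\big(1-\frac{(1+\sqrt{2\epsilon})^2}{2}\big)\alpha\|\bw\|_2^2$.

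It then remains to compare these constants with the stated ones. Both required inequalities, namely $\frac{(1+\sqrt{2\epsilon})^2}{2}\le\frac{1+5\sqrt\epsilon}{2}$ and $1-\frac{(1+\sqrt{2\epsilon})^2}{2}\ge\frac{1-5\sqrt\epsilon}{2}$, reduce to the single estimate $2\sqrt2\,\sqrt\epsilon+2\epsilon\le5\sqrt\epsilon$, i.e.\ $2\sqrt\epsilon\le5-2\sqrt2$, which holds since $\epsilon<1$. In the degenerate range where $1-5\sqrt\epsilon<0$ the claimed lower bound is negative and hence trivially true, since the block sum is nonnegative; one checks that in the complementary range $5\sqrt\epsilon<1$ the intermediate constant $1-\frac{(1+\sqrt{2\epsilon})^2}{2}$ is already nonnegative, so the transfer above is legitimate. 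I expect the only genuine idea to be the whitening step that converts the frame hypothesis into the Parseval form demanded by Theorem \ref{MSS}; after that the lower bound is automatic from complementarity and everything else is bookkeeping.
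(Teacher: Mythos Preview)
The paper does not prove this proposition itself; it cites it as a known result from \cite{NOU} (and \cite{OU}, \cite{HO}) and uses it as a black box in the proof of Lemma~\ref{Lim}. Your argument is correct and is precisely the standard derivation: whiten the frame via $F^{-1/2}$ to obtain a Parseval system with vector norms bounded by $\delta/\alpha$, apply Theorem~\ref{MSS}, obtain the lower bound on each block from complementarity with respect to the Parseval identity, and transfer back through $F^{1/2}$. Your handling of the constant comparison and the degenerate range $5\sqrt{\epsilon}\ge 1$ is also accurate (in particular, when $5\sqrt{\epsilon}<1$ one has $\epsilon<1/25$, hence $(1+\sqrt{2\epsilon})^2<2$, so the intermediate lower constant is indeed nonnegative and the transfer through $\alpha\|\bw\|_2^2\le\|F^{1/2}\bw\|_2^2$ preserves the inequality direction). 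This is the same route taken in the cited source \cite{NOU}.
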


\begin{Lemma}[{\cite[Lemma 1]{NOU}}]\label{AL1}
	Let $0<\delta<1/100$, and let $\alpha_j,\beta_j, j=0,1,\dots$, be defined inductively
	$$
\alpha_0=\beta_0=1, \quad \alpha_{j+1}:=\alpha_j\frac{1-5\sqrt{\delta/\alpha_j}}{2}, \quad \beta_{j+1}:=\beta_j\frac{1+5\sqrt{\delta/\alpha_j}}{2}.
	$$
Then there exist a positive absolute constant $C$ and a number $L\in\mathbb{N}$ such that 
$$
\alpha_j\geq 100\delta,\quad j\le L, \quad 25\delta\le \alpha_{L+1}<100\delta, \quad \beta_{L+1}<C\alpha_{L+1}.
$$
\end{Lemma}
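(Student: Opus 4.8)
\textbf{Plan for the proof of Lemma \ref{AL1}.}
The goal is a purely elementary analysis of the two coupled recursions. The key observation is that the sequence $\alpha_j$ is strictly decreasing while $\beta_j$ is strictly increasing, and that as long as $\alpha_j$ remains comfortably above $\delta$ the multiplicative factor $\frac{1-5\sqrt{\delta/\alpha_j}}{2}$ stays bounded away from both $0$ and $1/2$. The plan is to (i) show that $\alpha_j$ must eventually drop below $100\delta$, so that the index $L$ with $25\delta\le\alpha_{L+1}<100\delta$ is well defined; (ii) control how fast $\alpha_j$ can fall in a single step, which gives the lower bound $\alpha_{L+1}\ge 25\delta$; and (iii) compare the ratio $\beta_j/\alpha_j$ by telescoping the logarithms of the per-step factors, yielding $\beta_{L+1}<C\alpha_{L+1}$ for an absolute constant $C$.

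First I would establish that $\alpha_j$ is decreasing while it stays above $100\delta$. On the range $\alpha_j\ge 100\delta$ we have $\sqrt{\delta/\alpha_j}\le 1/10$, so the factor $\frac{1-5\sqrt{\delta/\alpha_j}}{2}$ lies in $[\,1/4,\,1/2\,]$; in particular $\alpha_{j+1}\le \alpha_j/2$, so after finitely many steps $\alpha_j$ drops below $100\delta$. Let $L$ be the last index for which $\alpha_j\ge 100\delta$ for all $j\le L$; then $\alpha_{L+1}<100\delta$ by definition. For the lower bound $\alpha_{L+1}\ge 25\delta$, note that since $\alpha_L\ge 100\delta$ we have $\sqrt{\delta/\alpha_L}\le 1/10$, hence the per-step factor is at least $\frac{1-1/2}{2}=1/4$, so
\[
\alpha_{L+1}=\alpha_L\cdot\frac{1-5\sqrt{\delta/\alpha_L}}{2}\ge \frac14\,\alpha_L\ge \frac14\cdot 100\delta = 25\delta.
\]
This simultaneously verifies $25\delta\le\alpha_{L+1}<100\delta$ and that $L$ is well defined.

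The main obstacle is the ratio bound $\beta_{L+1}<C\alpha_{L+1}$. I would track the logarithm of $R_j:=\beta_j/\alpha_j$. Writing $s_j:=5\sqrt{\delta/\alpha_j}$, one step multiplies $R_j$ by $\frac{1+s_j}{1-s_j}$, so
\[
\log R_{L+1}=\sum_{j=0}^{L}\log\frac{1+s_j}{1-s_j}.
\]
For $j\le L$ we have $\alpha_j\ge 100\delta$ and $s_j\le 1/2$, and on $[0,1/2]$ the function $\log\frac{1+s}{1-s}$ is bounded by an absolute multiple of $s$; thus the sum is dominated by $\sum_{j=0}^L s_j = 5\sqrt{\delta}\sum_{j=0}^L \alpha_j^{-1/2}$. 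Since $\alpha_{j+1}\le\alpha_j/2$ on this range, the values $\alpha_j^{-1/2}$ grow at least geometrically, so the sum is a geometric series dominated by its last term $\alpha_L^{-1/2}$; this yields $\sum_{j=0}^L s_j\le K\sqrt{\delta/\alpha_L}$ for an absolute constant $K$. Because $\alpha_L\ge 100\delta$, the quantity $\sqrt{\delta/\alpha_L}$ is bounded by $1/10$, so $\log R_{L+1}$ is bounded by an absolute constant, whence $R_{L+1}=\beta_{L+1}/\alpha_{L+1}\le C$ for an absolute $C$. The delicate point is making the geometric-summation estimate uniform in $\delta$; the hypothesis $\delta<1/100$ guarantees at least that the first factor $\frac{1-5\sqrt{\delta}}{2}$ is already bounded away from $1/2$, so the geometric decay is genuine from the start and the constant $C$ does not depend on $\delta$.
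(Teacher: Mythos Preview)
The paper does not give its own proof of Lemma~\ref{AL1}; the lemma is quoted verbatim from \cite[Lemma~1]{NOU} (see also \cite[Lemma~10.20]{OU}) and used as a black box in the proof of Lemma~\ref{Lim}. There is therefore no in-paper proof to compare against.

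Your argument is correct and is essentially the standard one. The halving $\alpha_{j+1}\le\alpha_j/2$ on the range $\alpha_j\ge 100\delta$ forces $\alpha_j$ below $100\delta$ in finitely many steps; the one-step lower bound $\alpha_{j+1}\ge\alpha_j/4$ on that same range yields $\alpha_{L+1}\ge 25\delta$; and the geometric growth of $\alpha_j^{-1/2}$ (by at least $\sqrt{2}$ per step) makes $\sum_{j\le L}\alpha_j^{-1/2}$ comparable to its last term $\alpha_L^{-1/2}$, so that $\sum_{j\le L}s_j\le K\sqrt{\delta/\alpha_L}\le K/10$ and $\beta_{L+1}/\alpha_{L+1}$ is bounded by an absolute constant. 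One small remark: your closing sentence about the first factor being ``bounded away from $1/2$'' is off-target and unnecessary. The factor $\tfrac{1-5\sqrt{\delta/\alpha_j}}{2}$ is always strictly below $1/2$, and the geometric summation needs only the upper bound $\alpha_{j+1}\le\alpha_j/2$, which holds automatically; the role of the hypothesis $\delta<1/100$ is rather to guarantee $\alpha_0=1\ge 100\delta$, so that the index $L$ exists at all.
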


If $\delta:=\theta N/M\ge 1/100$, then
 (\ref{A7}) holds with $J=\{1,2,\dots,M\}$ and $C_1= 1/\delta\le 100$, $c_0=1$, $C_0=100$. Assume $\delta<1/100$. Let $\alpha_j, \beta_j$ be as defined in Lemma \ref{AL1}; then the vectors $\bv_1,\dots, \bv_M$ satisfy the assumptions of Proposition \ref{AP1} with  $\alpha=\beta=1$.  We apply Proposition \ref{AP1} and choose a subset of the obtained partition with a smaller cardinality. We obtain a set $J_1\subset\{1,2,\dots, M\}$ with $|J_1|\leq M/2$ such that for all $ \bw\in\bbC^N$
\begin{equation*}
  \alpha_1\|\bw\|_2^2\le \sum\limits_{i\in J_1}|\<\bw,\bv_i\>|^2\le \beta_1\|\bw\|_2^2.
\end{equation*}
Since $\alpha_1>25\delta$ we can apply Proposition \ref{AP1} again and obtain $J_2\subset J_1$ with $|J_2|\le M/2^2$, for which we have two-sided inequalities  with $\alpha_2>0$ and $\beta_2$. Let $L$ be the number from Lemma \ref{AL1}. We iteratively apply 
Proposition \ref{AP1} (choosing at each step the subset  $S_i$  with the smallest cardinality) and find $J_1\supset J_2\supset\dots \supset J_{L+1}$ with the property
$$
\frac{1-5\sqrt{\delta/\alpha_L}}{2}\alpha_L\|\bw\|_2^2\le\sum\limits_{j\in J_{L+1}}|\<\bw, \bv_j\>|^2\le \frac{1+5\sqrt{\delta/\alpha_L}}{2}\beta_L\|\bw\|_2^2, \quad\forall\bw\in\bbC^N.
$$
By Lemma \ref{AL1} we obtain
$$
\frac{1-5\sqrt{\delta/\alpha_L}}{2}\alpha_L = \alpha_{L+1} \ge 25\delta,
$$
$$
 \frac{1+5\sqrt{\delta/\alpha_L}}{2}\beta_L=\beta_{L+1} \le C\alpha_{L+1} < 100C\delta.
 $$
 Thus, for $J:=J_{L+1}$ we have
\begin{equation*} 
25\theta\frac{N}{M}\|\bw\|_2^2\le \sum\limits_{i\in J}|\<\bw,\bv_i\>|^2\le 100C\theta\frac{N}{M}\|\bw\|_2^2.
\end{equation*}
Note that 
 $2^{-L-1}\le
\beta_{L+1}<100C\delta$, therefore
$|J_{L+1}|\le M/2^{L+1}\le 100CM\delta=100C\theta N$ as required.
\end{proof}

\section{Application to discretization}
\label{B}
	
The following corollary of Lemma \ref{Lim} is a generalization of Theorem $4.7$ from \cite{VT158} (see Theorem \ref{IT3} in Introduction). In \cite{VT158} instead of condition (\ref{4}) a stronger assumption  (\ref{4.19})  was imposed. 

\begin{Theorem}\label{BT1} Let  $\Omega_M=\{x^j\}_{j=1}^M$ be a discrete set with the probability measure $\mu_M(x^j)=1/M$, $j=1,\dots,M$. Assume that 
$\{u_i(x)\}_{i=1}^N$ is an orthonormal on $\Omega_M$ system (real or complex). Assume in addition that this system has the following property: for  all $j=1,\dots, M$  and for some $t>0$ we have
\be\label{4}
\sum_{i=1}^N |u_i(x^j)|^2 \le  Nt^2.
\ee
Then there is an absolute  constant $C_1$ such that there exists a subset $J\subset \{1,2,\dots,M\}$ with the property:  $m:=|J| \le C_1t^2 N$ and
 for any $f=\sum_{i=1}^N c_iu_i$  we have  
\be\label{4'}
C_2 \|f\|_2^2 \le \frac{1}{m}\sum_{j\in J} |f(x^j)|^2 \le C_3t^2 \|f\|_2^2, 
\ee
where $C_2$ and $C_3$ are absolute positive constants. 
\end{Theorem}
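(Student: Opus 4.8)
The plan is to translate the discretization statement on $\Omega_M$ into the vector language of Lemma \ref{Lim} by choosing the right vectors $\bv_j\in\bbC^N$. For each sampling point $x^j\in\Omega_M$, set
\[
\bv_j := \frac{1}{\sqrt{M}}\bigl(u_1(x^j),\dots,u_N(x^j)\bigr)\in\bbC^N,\qquad j=1,\dots,M.
\]
Then for a coefficient vector $\bw=(c_1,\dots,c_N)$ and $f=\sum_i c_i u_i$ (in the complex case one takes $\bw$ with entries $\overline{c_i}$, or conjugates appropriately so that $\langle\bw,\bv_j\rangle$ picks out $f(x^j)$; this is the only bookkeeping point to be careful about), we have $|\langle\bw,\bv_j\rangle|^2 = \tfrac1M|f(x^j)|^2$. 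Summing over $j$ and using that $\{u_i\}$ is orthonormal on $(\Omega_M,\mu_M)$ — i.e. $\frac1M\sum_{j=1}^M u_i(x^j)\overline{u_k(x^j)}=\delta_{ik}$ — gives exactly the tight-frame identity \eqref{A5}: $\sum_{j=1}^M|\langle\bw,\bv_j\rangle|^2=\|\bw\|_2^2$. Moreover $\|\bv_j\|_2^2=\frac1M\sum_{i=1}^N|u_i(x^j)|^2\le \frac{Nt^2}{M}$ by hypothesis \eqref{4}, which is precisely \eqref{A6} with $\theta=t^2$ (note $t\ge 1$, so $\theta=t^2\le M/N$ follows from integrating \eqref{4} or can be assumed WLOG, since if $t^2>M/N$ then \eqref{A6} is vacuous and one can just take $J=\{1,\dots,M\}$ — I should check that edge case, but it is immediate).

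Next I apply Lemma \ref{Lim} to $\bv_1,\dots,\bv_M$ with $\theta=t^2$. It produces $J\subset\{1,\dots,M\}$ with $|J|\le C_1 t^2 N$ and, for all $\bw$,
\[
c_0 t^2\|\bw\|_2^2 \le \frac{M}{N}\sum_{j\in J}|\langle\bw,\bv_j\rangle|^2 \le C_0 t^2\|\bw\|_2^2.
\]
Substituting $|\langle\bw,\bv_j\rangle|^2=\tfrac1M|f(x^j)|^2$ and $\|\bw\|_2^2=\|f\|_2^2$ (again by orthonormality of $\{u_i\}$ on $\Omega_M$), this reads $c_0 t^2\|f\|_2^2\le \frac1N\sum_{j\in J}|f(x^j)|^2\le C_0 t^2\|f\|_2^2$. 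Finally I divide through to get the average over $m=|J|$ points: writing $m=|J|$ and using $m\le C_1 t^2 N$ on one side and $m\le C_1 t^2 N$ (equivalently $N/m\ge 1/(C_1 t^2)$) on the other, I convert $\frac1N\sum_{j\in J}$ into $\frac{m}{N}\cdot\frac1m\sum_{j\in J}$ and absorb the factor $m/N\in[?,C_1 t^2]$; more carefully, from $\frac1N\sum_{j\in J}|f(x^j)|^2 \le C_0 t^2\|f\|_2^2$ and $N\le m$ one gets $\frac1m\sum_{j\in J}|f(x^j)|^2\le C_0 t^2\|f\|_2^2$, and from $\frac1N\sum_{j\in J}|f(x^j)|^2\ge c_0 t^2\|f\|_2^2$ together with $m\le C_1 t^2 N$ one gets $\frac1m\sum_{j\in J}|f(x^j)|^2\ge \frac{c_0}{C_1}\|f\|_2^2$. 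Thus \eqref{4'} holds with $C_2=c_0/C_1$ and $C_3=C_0$.

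The proof is essentially routine bookkeeping once Lemma \ref{Lim} is available; there is no real obstacle. The only points requiring a little care are: (i) the complex-conjugation convention so that $\langle\bw,\bv_j\rangle$ genuinely evaluates $f$ at $x^j$ in both the real and complex cases; (ii) verifying that the normalization $1/\sqrt M$ makes \eqref{A5} hold on the nose, which is exactly the statement that $\{u_i\}$ is orthonormal with respect to the normalized counting measure $\mu_M$; and (iii) the harmless degenerate case $t^2>M/N$, handled by taking all points. Everything else is the substitution $|\langle\bw,\bv_j\rangle|^2=\tfrac1M|f(x^j)|^2$ and the conversion between the $\tfrac1N\sum$ normalization coming out of Lemma \ref{Lim} and the $\tfrac1m\sum$ normalization in the statement, using $N\le m\le C_1 t^2 N$.
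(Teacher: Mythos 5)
Your proposal is correct and takes essentially the same route as the paper's proof: the same vectors $\bv_j=M^{-1/2}(u_1(x^j),\dots,u_N(x^j))$, verification of \eqref{A5} via orthonormality and of \eqref{A6} with $\theta=t^2$, application of Lemma \ref{Lim}, and conversion between the $1/N$ and $1/m$ normalizations using $N\le m\le C_1t^2N$. The only small slip is the parenthetical claim that $t^2\le M/N$ follows from integrating \eqref{4} (integration only gives $t\ge 1$), but your fallback of taking $J=\{1,\dots,M\}$ when $t^2>M/N$ --- where the discretization is exact by orthonormality --- handles that degenerate case correctly.
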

\begin{proof} Define the column vectors
\begin{equation*}
\bv_j := M^{-1/2}(u_1(x^j),\dots,u_N(x^j))^T,\qquad j=1,\dots, M.
\end{equation*}
Then our assumption (\ref{4}) implies that the system $\bv_1,\dots,\bv_M$ satisfies (\ref{A6}) with $\theta=t^2$. For any $\bw=(w_1,\dots,w_N)^T\in \bbC^N$ we have 
$$
\sum_{j=1}^M |\<\bw,\bv_j\>|^2 = \frac{1}{M}\sum_{j=1}^M \sum_{i,k=1}^N w_i {\bar w}_k {\bar u}_i(x^j) u_k(x^j) = \sum_{i=1}^N |w_i|^2
$$
by the orthonormality assumption. This implies that the system $\bv_1,\dots,\bv_M$ satisfies (\ref{A5}). 

Note that the necessary condition for (\ref{4'}) to hold is $m\ge N$. Applying Lemma \ref{Lim} we complete the proof of  Theorem \ref{BT1}.   
\end{proof}

The following Theorem \ref{BT2}, which is a complex analog of Theorem \ref{BSS}, can  be derived from Corollary \ref{weighted} in the same way as we have derived Theorem \ref{BT1} from Lemma \ref{Lim} above.

\begin{Theorem}\label{BT2}
Let  $\Omega_M=\{x^j\}_{j=1}^M$ be a discrete set with the probability measure $\mu_M(x^j)=1/M$, $j=1,\dots,M$. Assume that 
$\{u_i(x)\}_{i=1}^N$ is an orthonormal on $\Omega_M$ system (real or complex). 
Then there is an absolute  constant $C_1$ such that there exists a set of weights $\lambda_j\geq 0$, $j=1,\dots, M$, with the property:  $m:=|\{j:\lambda_j\neq 0\}| \le C_1N$ and
    for any $f=\sum_{i=1}^N c_iu_i$  we have  
\begin{equation*}
c_0 \|f\|_2^2 \le \sum_{j=1}^M \lambda_j|f(x^j)|^2 \le C_0 \|f\|_2^2, 
\end{equation*}
where $c_0$ and $C_0$ are from Lemma \ref{Lim}. 
\end{Theorem}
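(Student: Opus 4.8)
The plan is to mimic exactly the proof of Theorem~\ref{BT1}, but apply Corollary~\ref{weighted} in place of Lemma~\ref{Lim}. First I would introduce the same normalized column vectors
\[
\bv_j := M^{-1/2}\bigl(u_1(x^j),\dots,u_N(x^j)\bigr)^T,\qquad j=1,\dots,M,
\]
so that for each $\bw=(w_1,\dots,w_N)^T\in\bbC^N$ one has $\<\bw,\bv_j\>=M^{-1/2}\sum_{i=1}^N w_i\overline{u_i(x^j)}$. Exactly as in the proof of Theorem~\ref{BT1}, expanding the square and using orthonormality of $\{u_i\}_{i=1}^N$ on $\Omega_M$ (with the measure $\mu_M(x^j)=1/M$) gives
\[
\sum_{j=1}^M |\<\bw,\bv_j\>|^2 = \frac{1}{M}\sum_{j=1}^M\sum_{i,k=1}^N w_i\bar w_k\,\overline{u_i(x^j)}\,u_k(x^j) = \sum_{i=1}^N |w_i|^2 = \|\bw\|_2^2,
\]
so the tight frame condition \eqref{A5} holds. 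Note that here, unlike in Theorem~\ref{BT1}, no pointwise bound on $\sum_i |u_i(x^j)|^2$ is assumed, which is fine because Corollary~\ref{weighted} requires only \eqref{A5}.

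Next I would apply Corollary~\ref{weighted} to the system $\bv_1,\dots,\bv_M$: it produces weights $\mu_j\ge 0$, $j=1,\dots,M$, with $|\{j:\mu_j\ne 0\}|\le 2C_1N$ and
\[
c_0\|\bw\|_2^2 \le \sum_{j=1}^M \mu_j|\<\bw,\bv_j\>|^2 \le C_0\|\bw\|_2^2,\qquad \forall\bw\in\bbC^N.
\]
The final step is to translate this back to functions. Given $f=\sum_{i=1}^N c_i u_i$, set $\bw=(c_1,\dots,c_N)^T$; then $\|f\|_2^2=\|\bw\|_2^2$ by orthonormality, and $|\<\bw,\bv_j\>|^2 = M^{-1}\bigl|\sum_{i=1}^N c_i\overline{u_i(x^j)}\bigr|^2$. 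To match the target form $\sum_j \lambda_j|f(x^j)|^2$ I need $|f(x^j)|^2$, not its complex conjugate inside the sum, so I would absorb the factor $M^{-1}$ into the weights by defining $\lambda_j := \mu_j/M$ and observe that $\bigl|\sum_i c_i\overline{u_i(x^j)}\bigr| = \bigl|\overline{\sum_i \bar c_i u_i(x^j)}\bigr|$; a cleaner route is simply to note $|\<\bw,\bv_j\>|^2 = M^{-1}|f(x^j)|^2$ after replacing $c_i$ by $\bar c_i$ throughout (permissible since the statement quantifies over all $f\in X_N$, equivalently over all coefficient vectors). Either way, $\sum_{j=1}^M \mu_j|\<\bw,\bv_j\>|^2 = \sum_{j=1}^M \lambda_j|f(x^j)|^2$ with $\lambda_j\ge 0$ and $|\{j:\lambda_j\ne 0\}|\le 2C_1N$, and the two-sided bound becomes $c_0\|f\|_2^2\le\sum_j\lambda_j|f(x^j)|^2\le C_0\|f\|_2^2$. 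Setting the absolute constant in the statement to be $2C_1$ (renamed $C_1$) finishes the proof.

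There is essentially no obstacle here — the argument is a direct dualization of Corollary~\ref{weighted}, entirely parallel to Theorem~\ref{BT1}. The only mild point requiring care is the bookkeeping between $\<\bw,\bv_j\>$ and $f(x^j)$ in the complex case: one must make sure the conjugation in the inner product does not spoil the identity $|\<\bw,\bv_j\>| = M^{-1/2}|f(x^j)|$, which is handled by the observation above that the family of admissible $f$ is closed under coefficient conjugation (or, trivially, by noting $|z|=|\bar z|$). Everything else is the same orthonormality expansion used in the proof of Theorem~\ref{BT1}, and the cardinality and constants are inherited verbatim from Corollary~\ref{weighted}.
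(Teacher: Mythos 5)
Your proof is correct and follows exactly the route the paper intends: the paper does not write out a proof of Theorem~\ref{BT2} but states that it ``can be derived from Corollary~\ref{weighted} in the same way as we have derived Theorem~\ref{BT1} from Lemma~\ref{Lim},'' which is precisely what you do — same vectors $\bv_j$, same orthonormality computation verifying \eqref{A5}, then Corollary~\ref{weighted} in place of Lemma~\ref{Lim}. Your extra care with the conjugation in the complex inner product and the absorption of the factor $M^{-1}$ into the weights is sound and fills in the bookkeeping the paper leaves implicit.
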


Further, using Theorem \ref{BT2} and repeating the argument in the proof of Theorem 6.3 from \cite{DPSTT2} (with natural modifications from the real case to the complex case), which was used to derive Theorem \ref{DPSTT} from Theorems \ref{BSS} and \ref{IT2}, we obtain the complex analog of Theorem \ref{DPSTT} -- Theorem \ref{ITw}, which we formulate below as Theorem \ref{BT3} for the reader's convenience. Note that the complex version of Theorem 
\ref{IT2} can be proved in the same way as Theorem \ref{IT2} was proved in \cite{VT159}.

\begin{Theorem}\label{BT3} If $X_N$ is an $N$-dimensional subspace of the complex $L_2(\Omega,\mu)$, then there exist three absolute positive constants $C_1'$, $c_0'$, $C_0'$,  a set of $m\leq   C_1'N$ points $\xi^1,\ldots, \xi^m\in\Omega$, and a set of nonnegative  weights $\lambda_j$, $j=1,\ldots, m$,  such that
\[ c_0'\|f\|_2^2\leq  \sum_{j=1}^m \lambda_j |f(\xi^j)|^2 \leq  C_0' \|f\|_2^2,\  \ \forall f\in X_N.\]
\end{Theorem}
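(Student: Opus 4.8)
\textbf{Proof proposal for Theorem \ref{BT3}.}

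The plan is to reduce the general subspace $X_N\subset L_2(\Omega,\mu)$ to the situation already handled by Theorem \ref{BT2} by a two-stage discretization. First I would fix an orthonormal basis $u_1,\dots,u_N$ of $X_N$ in $L_2(\Omega,\mu)$; these need satisfy no boundedness condition, so a direct application of Lemma \ref{Lim} or Corollary \ref{weighted} is impossible. Instead I would first invoke the (complex version of the) Marcinkiewicz-type discretization of Theorem \ref{IT2}: for a fixed small $\varepsilon$, say $\varepsilon=1/2$, there exists a finite set $\{y^k\}_{k=1}^M\subset\Omega$ with $M\le C\varepsilon^{-2}N\log N$ such that
\[
(1-\varepsilon)\|f\|_2^2 \le \frac1M\sum_{k=1}^M |f(y^k)|^2 \le (1+\varepsilon)\|f\|_2^2,\qquad \forall f\in X_N.
\]
This replaces the abstract measure space by a uniform discrete space $\Omega_M=\{y^k\}_{k=1}^M$ on which the $L_2$-norm of $X_N$ is equivalent to the original one. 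The cost is the large (but now finite and controllable) cardinality $M$; crucially $M$ does \emph{not} enter the final bound on the number of points.

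Second, on the discrete space $(\Omega_M,\mu_M)$ I would apply Theorem \ref{BT2}. The restrictions of $u_1,\dots,u_N$ to $\Omega_M$ need not be orthonormal with respect to $\mu_M$, but since the $\mu_M$-norm is equivalent to the $\mu$-norm on the $N$-dimensional space $X_N|_{\Omega_M}$ (which is still $N$-dimensional because the lower bound above prevents collapse), a Gram--Schmidt orthonormalization in $L_2(\Omega_M,\mu_M)$ produces an orthonormal system $\tilde u_1,\dots,\tilde u_N$ spanning the same subspace. Theorem \ref{BT2} then yields nonnegative weights $\lambda_k$, $k=1,\dots,M$, with at most $C_1 N$ of them nonzero, such that
\[
c_0 \|g\|_{L_2(\Omega_M,\mu_M)}^2 \le \sum_{k=1}^M \lambda_k |g(y^k)|^2 \le C_0 \|g\|_{L_2(\Omega_M,\mu_M)}^2,\qquad \forall g\in X_N|_{\Omega_M}.
\]
Keeping only the indices $k$ with $\lambda_k\neq 0$ gives the points $\xi^1,\dots,\xi^m$ with $m\le C_1 N$ and the associated positive weights.

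Finally I would compose the two estimates: for $f\in X_N$, the quantity $\sum_k\lambda_k|f(y^k)|^2$ is sandwiched between $c_0\|f\|_{L_2(\Omega_M,\mu_M)}^2$ and $C_0\|f\|_{L_2(\Omega_M,\mu_M)}^2$, and $\|f\|_{L_2(\Omega_M,\mu_M)}^2=\frac1M\sum_k|f(y^k)|^2$ is in turn sandwiched between $(1-\varepsilon)\|f\|_2^2$ and $(1+\varepsilon)\|f\|_2^2$. Chaining these gives
\[
c_0(1-\varepsilon)\|f\|_2^2 \le \sum_{j=1}^m\lambda_j|f(\xi^j)|^2 \le C_0(1+\varepsilon)\|f\|_2^2,
\]
so one may take $c_0'=c_0(1-\varepsilon)$ and $C_0'=C_0(1+\varepsilon)$, all absolute constants, and $C_1'=C_1$. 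The main obstacle, and the only point requiring genuine care, is verifying that the first-stage discretization (Theorem \ref{IT2}) is valid in the \emph{complex} setting and for a general abstract $(\Omega,\mu)$ — but as noted in the excerpt this follows by the same proof as in \cite{VT159}; a secondary subtlety is ensuring that the dimension is preserved when passing to $\Omega_M$, which is exactly guaranteed by the lower Marcinkiewicz inequality. Everything else is bookkeeping with absolute constants.
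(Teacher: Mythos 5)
There is a genuine gap at the first stage of your argument. Theorem \ref{IT2} is not applicable to an arbitrary $N$-dimensional subspace of $L_2(\Omega,\mu)$: its hypothesis is Condition E, i.e.\ a uniform pointwise bound $\sum_{i=1}^N|u_i(x)|^2\le Nt^2$, and a general subspace of $L_2$ satisfies no such bound for any finite $t$ (its elements need not even be bounded, or lie in $L_4$). So the step producing the intermediate finite set $\{y^k\}_{k=1}^M$ with $\frac1M\sum_k|f(y^k)|^2$ equivalent to $\|f\|_2^2$ is unjustified -- and this is exactly the difficulty that separates Theorem \ref{BT3} (and Theorem \ref{DPSTT}) from the earlier results that assume Condition E or $X_N\subset L_4(\Omega,\mu)$ (see Proposition \ref{P2}, which explicitly requires $\|u_i\|_4<\infty$). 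You notice this obstacle for Lemma \ref{Lim} and Corollary \ref{weighted}, but the same obstacle defeats your substitute.

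The missing idea, which is the content of the proof of Theorem 6.3 in \cite{DPSTT2} that the paper invokes (with Theorem \ref{BT2} replacing the real-case input), is a change of density. Fix an orthonormal basis $u_1,\dots,u_N$ of $X_N$, set $w(x):=N^{-1}\sum_{i=1}^N|u_i(x)|^2$, and pass to the probability measure $d\nu=w\,d\mu$ and the functions $v_i:=u_i/\sqrt{w}$ on $\{w>0\}$. These are orthonormal in $L_2(\Omega,\nu)$ and satisfy $\sum_i|v_i(x)|^2=N$ pointwise, so Condition E holds with $t=1$ (and $v_i\in L_4(\nu)$ automatically, since $|u_i|^2\le Nw$). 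One can then legitimately discretize this renormalized system -- e.g.\ reduce to a finite set and apply Theorem \ref{BT2}, or apply Theorem \ref{BT4} directly as in Remark \ref{BR1} -- and translate back to $X_N$ and $\mu$, the factor $1/w(\xi^j)$ being absorbed into the weight $\lambda_j$; this is where the nonuniform weights genuinely come from. Your second stage (Gram--Schmidt on the finite set plus Theorem \ref{BT2}) and the final chaining of the two-sided estimates are fine; only the reduction to a finite $\Omega_M$ needs to be replaced by the density argument.
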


\begin{Remark}\label{BR1}  A combination of the proof of Theorem 6.3 from \cite{DPSTT2} with 
Theorem \ref{BT4} (in the proof of Theorem 6.3 from \cite{DPSTT2} we use Theorem \ref{BT4} instead of Theorem 1.2 (Theorem \ref{IT2} above)) gives Theorem \ref{BT3} with $C_1'=C_1'$, $c_0'=C_2'$, and $C_0'=C_3'$, where $C_i'$, $i=1,2,3$, are from Theorem \ref{BT4}.  It is another way to prove Theorem \ref{BT3}.
\end{Remark}

It is important to emphasize that in the proofs of Theorems \ref{BT2} and \ref{BT3}, which are complex companions of Theorems \ref{BSS} and \ref{DPSTT}, we did not use Theorems \ref{BSS} and \ref{DPSTT}. Thus, our arguments give other proofs of analogs of Theorems \ref{BSS} and \ref{DPSTT}. Note that constants in Theorems \ref{BT2} and \ref{BT3}  are not as good as constants in Theorems \ref{BSS} and \ref{DPSTT}.

{
\bigskip

{\bf A comment on connection between real and complex weighted discretization.} We show here that good discretization of the $L_2(\Omega,\mu)$-norm of functions from real subspaces of dimension $2N$ implies good discretization of the $L_2(\Omega,\mu)$-norm of functions from complex subspaces of dimension $N$.}
\begin{Definition}\label{BD1} {Let $X_N$ be a subspace of $L_2(\Omega,\mu)$. For $m\in\N$ and positive constants $C_1\le C_2$ we write $X_N \in \cM^w(m,2,C_1,C_2)$ if there exist a set of points $\xi^1,\dots,\xi^m \in \Omega$ and a set of weights $\la_\nu$, $\nu=1,\dots,m$, such that for any $f\in X_N$ we have
\be\label{BA3}
C_1\|f\|_2^2 \le  \sum_{\nu=1}^m \la_\nu |f(\xi^\nu)|^2 \le C_2\|f\|_2^2.
\ee}
\end{Definition} 

\begin{Proposition}\label{BPcr} {Let $X_N=\sp(w_1,\dots,w_N)$ be a subspace of complex $L_2(\Omega,\mu)$. Suppose that $w_j=u_j+iv_j$, where $u_j$, $v_j$ are real functions, $j=1,\dots,N$. 
Denote $Y_{S}:=\sp(u_1,\dots,u_N,v_1,\dots,v_N)$, $S:=\dim Y_S\le 2N$, a real subspace of $L_2(\Omega,\mu)$. 
Then
$$
Y_{S} \in \cM^w(m,2,C_1,C_2)\quad \text{implies} \quad X_N \in \cM^w(m,2,C_1,C_2).
$$
Moreover, for discretization of $X_N$ we can use the same points and weights as for discretization of $Y_{S}$.}
\end{Proposition}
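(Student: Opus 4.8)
\textbf{Proof plan for Proposition \ref{BPcr}.}

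The plan is to exploit the fact that the real-valued linear data $\{u_j(x), v_j(x)\}$ at a point $x$ determine the complex values $\{w_j(x)\}$, and conversely, so that evaluating a complex function $f \in X_N$ at a point is the same as evaluating a pair of real functions in $Y_S$ at that point. First I would take an arbitrary $f = \sum_{j=1}^N c_j w_j \in X_N$ with $c_j \in \bbC$, write $c_j = a_j + i b_j$ with $a_j, b_j \in \R$, and expand $f = \sum_j (a_j + i b_j)(u_j + i v_j) = g + i h$, where $g := \sum_j (a_j u_j - b_j v_j)$ and $h := \sum_j (a_j v_j + b_j u_j)$. Both $g$ and $h$ are real linear combinations of $u_1, \dots, u_N, v_1, \dots, v_N$, hence both lie in $Y_S$.

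The key identity is the pointwise relation $|f(x)|^2 = g(x)^2 + h(x)^2$ for every $x \in \Omega$, and simultaneously, by orthogonality of real and imaginary parts in the complex $L_2$ inner product (or simply $\int |f|^2 = \int g^2 + \int h^2$ since $g,h$ are real), $\|f\|_2^2 = \|g\|_2^2 + \|h\|_2^2$. Now suppose $Y_S \in \cM^w(m,2,C_1,C_2)$, witnessed by points $\xi^1, \dots, \xi^m$ and weights $\la_1, \dots, \la_m$. Applying \eqref{BA3} to $g \in Y_S$ and to $h \in Y_S$ separately and adding the two chains of inequalities gives
$$
C_1(\|g\|_2^2 + \|h\|_2^2) \le \sum_{\nu=1}^m \la_\nu \bigl( g(\xi^\nu)^2 + h(\xi^\nu)^2 \bigr) \le C_2(\|g\|_2^2 + \|h\|_2^2).
$$
Using $g(\xi^\nu)^2 + h(\xi^\nu)^2 = |f(\xi^\nu)|^2$ and $\|g\|_2^2 + \|h\|_2^2 = \|f\|_2^2$, this is exactly $C_1 \|f\|_2^2 \le \sum_{\nu=1}^m \la_\nu |f(\xi^\nu)|^2 \le C_2 \|f\|_2^2$. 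Since $f \in X_N$ was arbitrary and the same points and weights were used, we conclude $X_N \in \cM^w(m,2,C_1,C_2)$, with the last sentence of the proposition being automatic from the construction.

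The argument is essentially a bookkeeping computation, so there is no serious obstacle; the only point requiring a word of care is ensuring that $g$ and $h$ genuinely lie in $Y_S$ (which is immediate from $Y_S = \sp(u_1,\dots,u_N,v_1,\dots,v_N)$) and that one may apply the discretization inequality of $Y_S$ to each of them — this is fine because \eqref{BA3} holds for \emph{all} $f \in Y_S$, with a fixed point set and fixed weights independent of the function. One should also note that $S = \dim Y_S$ may be strictly less than $2N$ (if the $u_j, v_j$ are linearly dependent), but this plays no role: we only need $g, h \in Y_S$ and the hypothesis on $Y_S$, not any relation between $S$ and $2N$.
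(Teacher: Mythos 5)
Your proposal is correct and follows essentially the same argument as the paper: the paper writes $f=f_R+if_I$ with $f_R,f_I\in Y_S$ (your $g$ and $h$), uses the pointwise identity $|f(\xi^\nu)|^2=f_R(\xi^\nu)^2+f_I(\xi^\nu)^2$ together with $\|f\|_2^2=\|f_R\|_2^2+\|f_I\|_2^2$, and applies the discretization inequality for $Y_S$ to both parts. Your explicit coefficient expansion just spells out the (immediate) check that the real and imaginary parts lie in $Y_S$; otherwise the two proofs coincide.
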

\begin{proof} {Take an $f\in X_N$ and write
$$
f=  f_R+if_I,\quad f_R, f_I \in Y_{S}.
$$
Assume that a set of points $\xi^1,\dots,\xi^m \in \Omega$ and a set of weights $\la_\nu$, $\nu=1,\dots,m$, are such that for any $g\in Y_{S}$ we have
\be\label{BA4}
C_1\|g\|_2^2 \le  \sum_{\nu=1}^m \la_\nu |g(\xi^\nu)|^2 \le C_2\|g\|_2^2.
\ee
Then on one hand
$$
 \sum_{\nu=1}^m \la_\nu |f(\xi^\nu)|^2 =  \sum_{\nu=1}^m \la_\nu (|f_R(\xi^\nu)|^2+|f_I(\xi^\nu)|^2)
 \le C_2(\|f_R\|_2^2 +\|f_I\|_2^2) = C_2\|f\|_2^2.
 $$
 On the other hand
 $$
 \sum_{\nu=1}^m \la_\nu |f(\xi^\nu)|^2 =  \sum_{\nu=1}^m \la_\nu (|f_R(\xi^\nu)|^2+|f_I(\xi^\nu)|^2)
 \ge C_1(\|f_R\|_2^2 +\|f_I\|_2^2) = C_1\|f\|_2^2.
 $$
The above inequalities prove Proposition \ref{BPcr}.}
\end{proof}

\begin{Remark}\label{BRcr} {Proposition \ref{BPcr} and Theorem \ref{DPSTT} imply that Theorem \ref{ITw} holds with $m\leq    \lceil 2bN \rceil$, $b\in (1,2]$, and $c_0'=1$, $C_0'= C(b-1)^{-2}$, where $C$ is an absolute constant from Theorem \ref{DPSTT}.}
\end{Remark}

 {\bf A remark on sampling recovery.} We mentioned in the Introduction that inequality (\ref{osrin}) 
was proved in \cite{VT183} with the help of Theorem \ref{ITw}. We now point out that if, in the proof of (\ref{osrin}) (from \cite{VT183}), we replace
Theorem \ref{ITw} with  either Theorem \ref{DPSTT}  (real case) or Remark \ref{BRcr} (complex case), 
then we obtain the following version of  (\ref{osrin}).
\begin{Theorem}\label{SRb}  For any $b\in(1,2]$ there exists a positive constant $B=B(b)$ such that for any   compact subset $\Omega$  of $\R^d$, any probability measure $\mu$ on it, and any compact subset $\bF$ of $\C(\Omega)$ we have in the real case
$$
\ro_{\lceil b(n+1) \rceil}(\bF,L_2(\Omega,\mu)) \le Bd_n(\bF,L_\infty)
$$
and in the complex case
$$
\ro_{\lceil b(2n+1) \rceil}(\bF,L_2(\Omega,\mu)) \le Bd_n(\bF,L_\infty).
$$
\end{Theorem}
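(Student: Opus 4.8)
The plan is to follow, essentially verbatim, the proof of (\ref{osrin}) given in \cite{VT183}, replacing its use of Theorem \ref{ITw} by the discretization results that display the dependence on the cardinality constant $b$: Theorem \ref{DPSTT} in the real case, and Theorem \ref{DPSTT} combined with Proposition \ref{BPcr} in the complex case. Fix $b\in(1,2]$, a compact $\bF\subset\C(\Omega)$, set $\sigma:=d_n(\bF,L_\infty)$, and fix $\e>0$. By definition of the Kolmogorov width one can choose an $n$-dimensional subspace $X_n$, which (as in \cite{VT183}) may be taken inside $\C(\Omega)$, such that for every $f\in\bF$ there is $g=g(f)\in X_n$ with $\|f-g\|_\infty\le\sigma+\e$. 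The one place where the dimension count is forced up by one is that the weighted discretization must be applied to a subspace containing the constant function $\mathbf 1$, so that the total mass of the resulting weights is controlled.

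In the real case set $Y:=\sp_\R(X_n,\mathbf 1)$, so $\dim Y\le n+1$, and apply Theorem \ref{DPSTT} to $Y$: it produces $m\le\lceil b(n+1)\rceil$ points $\xi^1,\dots,\xi^m\in\Omega$ and weights $\la_j\ge0$ with $\|\phi\|_2^2\le\sum_{j}\la_j\phi(\xi^j)^2\le C(b-1)^{-2}\|\phi\|_2^2$ for all $\phi\in Y$. Taking $\phi=\mathbf 1$ and using $\|\mathbf 1\|_2=1$ gives $\sum_j\la_j\le C(b-1)^{-2}=:C_0'$, while the same two-sided bound holds for every $f\in X_n\subset Y$. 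In the complex case, take a basis $w_1,\dots,w_{n'}$ of $X_n$ with $n'\le n$, write $w_k=u_k+iv_k$ with $u_k,v_k$ real, and set $Y:=\sp_\R(u_1,\dots,u_{n'},v_1,\dots,v_{n'},\mathbf 1)$, so $\dim Y\le 2n+1$; applying Theorem \ref{DPSTT} to $Y$ now gives $m\le\lceil b(2n+1)\rceil$ points and nonnegative weights with the same inequalities for real $\phi\in Y$, hence again $\sum_j\la_j\le C_0'$, and splitting $f=f_R+if_I$ with $f_R,f_I\in\sp_\R(u_k,v_k)\subset Y$ exactly as in the proof of Proposition \ref{BPcr} yields $\|f\|_2^2\le\sum_j\la_j|f(\xi^j)|^2\le C_0'\|f\|_2^2$ for every complex $f\in X_n$. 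Thus, in both cases, for the relevant value of $m$ we have points $\xi^j$ and weights $\la_j\ge0$ with $\sum_j\la_j\le C_0'$ that discretize the $L_2$-norm on $X_n$ with lower constant $1$ and upper constant $C_0'$.

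Next define the recovery map: let $\Phi_\xi(\by)$ be the minimizer over $\phi\in X_n$ of $\sum_j\la_j|y_j-\phi(\xi^j)|^2$. Since $\phi\mapsto\sum_j\la_j|\phi(\xi^j)|^2$ is a positive definite quadratic form on the finite-dimensional space $X_n$ (positivity being exactly the lower discretization bound), this minimizer is unique and is a linear function of $\by$, so $\Phi_\xi$ is an admissible linear operator. For $f\in\bF$, with $g=g(f)\in X_n$ as above and $\hat g:=\Phi_\xi\big((f(\xi^j))_{j=1}^m\big)$, write $\|f-\hat g\|_2\le\|f-g\|_2+\|g-\hat g\|_2$. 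The first term is at most $\|f-g\|_\infty\le\sigma+\e$ since $\mu$ is a probability measure. For the second, $g-\hat g\in X_n$, so using the lower bound, the minimizing property of $\hat g$ (with $g$ a competitor), and $\sum_j\la_j\le C_0'$,
\[
\|g-\hat g\|_2^2\le\sum_j\la_j|g(\xi^j)-\hat g(\xi^j)|^2\le 4\sum_j\la_j|f(\xi^j)-g(\xi^j)|^2\le 4C_0'\|f-g\|_\infty^2\le 4C_0'(\sigma+\e)^2 .
\]
Hence $\|f-\hat g\|_2\le(1+2\sqrt{C_0'})(\sigma+\e)=(1+2\sqrt{C}/(b-1))(\sigma+\e)$. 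Since $\ro_m$ is nonincreasing in $m$ (extra sample points can always be ignored) and $\e>0$ is arbitrary, letting $\e\to0$ gives the asserted bound with $B(b):=1+2\sqrt{C}/(b-1)$, using $\lceil b(n+1)\rceil$ points in the real case and $\lceil b(2n+1)\rceil$ points in the complex case.

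There is no genuine obstacle here: the statement is, by design (see the remark preceding it), obtained from the proof of (\ref{osrin}) by a substitution of the discretization input. The two places that require a little care are (i) the need to adjoin the constant function $\mathbf 1$ to the subspace on which the weighted discretization is invoked, which is what keeps $\sum_j\la_j$ finite — and, in the complex case, the decision to work with the $2n$-dimensional real span of the real and imaginary parts and adjoin a single \emph{real} constant (rather than adjoining a complex constant to $X_n$), which is precisely what keeps the count at $\lceil b(2n+1)\rceil$ instead of $\lceil 2b(n+1)\rceil$; and (ii) the verification that the weighted least-squares operator onto $X_n$ is linear, which rests on the positive definiteness of the discrete quadratic form guaranteed by the lower discretization inequality.
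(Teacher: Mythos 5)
Your proof is correct and follows essentially the route the paper intends: the paper's entire ``proof'' is the one-sentence remark that one should rerun the argument of \cite{VT183} with Theorem \ref{DPSTT} (real case) or its complexification via Proposition \ref{BPcr} (complex case) in place of Theorem \ref{ITw}, and your write-up is exactly that substitution carried out in detail (adjoining $\mathbf 1$ to control $\sum_j\la_j$, weighted least squares as the linear recovery map, and the triangle-inequality error bound). You also correctly identify the one point the paper glosses over, namely that the count $\lceil b(2n+1)\rceil$ in the complex case requires applying the real discretization to the real span of the real and imaginary parts together with a single real constant, rather than invoking Remark \ref{BRcr} on the complex space $\sp_{\bbC}(X_n,\mathbf 1)$, which would only give $\lceil 2b(n+1)\rceil$.
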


 Let $\Omega$ be a  nonempty compact set in $\R^d$ and let  $X_N$ be an $N$-dimensional subspace of real (or complex) space of continuous functions $\mathcal C(\Omega)$. Let $\mu$ be a probability measure on $\Omega$ and let $\{u_i(x)\}_{i=1}^N$ be an orthonormal basis for $X_N$.

{\bf Nikol'skii inequality.} We say that $X_N$ satisfies the Nikol'skii inequality for the pair $(2,\infty)$ if there exists a constant  $t>0$ such that
\begin{equation}\label{6}
\|f\|_\infty \leq t N^{\frac12}\|f\|_2,\   \ \forall f\in X_N.
\end{equation}
We point out that condition \eqref{6} with $X_N= \sp(u_1,\dots,u_N)$, where $\{u_j\}_{j=1}^N$ is an orthonormal system, is equivalent to Condition E. This can be seen from the following simple well-known result, which is a corollary of the Cauchy inequality.		
\begin{Proposition}\label{P1}  Let $X_N$ be an $N$-dimensional subspace of $\mathcal C(\Omega)$.
	Then for any orthonormal basis $\{u_i\}_{i=1}^N$ of $X_N\subset L_2(\Omega,\mu)$ we have that  for $x\in \Omega$
\begin{equation*} 
	\sup_{f\in X_N; f\neq 0}|f(x)|/\|f\|_2 =  \left(\sum_{i=1}^N |u_i(x)|^2\right)^{1/2}  .
\end{equation*}
\end{Proposition}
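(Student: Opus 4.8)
The plan is to expand an arbitrary $f\in X_N$ in the given orthonormal basis $\{u_i\}_{i=1}^N$ and apply the Cauchy--Schwarz inequality in $\bbC^N$ (respectively $\R^N$), keeping track of the equality case to obtain a matching lower bound.

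First I would write $f=\sum_{i=1}^N c_iu_i$ with scalars $c_i$. Orthonormality of $\{u_i\}$ in $L_2(\Omega,\mu)$ gives $\|f\|_2^2=\sum_{i=1}^N|c_i|^2$. Now fix $x\in\Omega$; since $f(x)=\sum_{i=1}^N c_iu_i(x)$, Cauchy--Schwarz yields
$$|f(x)|\le\Big(\sum_{i=1}^N|c_i|^2\Big)^{1/2}\Big(\sum_{i=1}^N|u_i(x)|^2\Big)^{1/2}=\|f\|_2\Big(\sum_{i=1}^N|u_i(x)|^2\Big)^{1/2}.$$
Dividing by $\|f\|_2$ and taking the supremum over $f\neq 0$ gives the inequality ``$\le$'' in the claimed identity.

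For the reverse inequality I would exhibit an extremal function. If $\sum_{i=1}^N|u_i(x)|^2=0$, then $u_i(x)=0$ for every $i$, so $f(x)=0$ for all $f\in X_N$, and both sides of the asserted equality are $0$. Otherwise set $c_i:=\overline{u_i(x)}$, i.e. $f:=\sum_{i=1}^N\overline{u_i(x)}\,u_i\in X_N$. Then $f(x)=\sum_{i=1}^N|u_i(x)|^2>0$ and $\|f\|_2^2=\sum_{i=1}^N|u_i(x)|^2$, so $f\neq 0$ and $|f(x)|/\|f\|_2=\big(\sum_{i=1}^N|u_i(x)|^2\big)^{1/2}$, which meets the upper bound.

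There is essentially no obstacle: the only point requiring a moment of care is the degenerate case $\sum_{i=1}^N|u_i(x)|^2=0$, which must be treated separately so that the supremum over $f\neq 0$ is correctly identified as $0$. The argument is identical in the real and complex cases (with $\overline{u_i(x)}=u_i(x)$ in the real setting), so no separate treatment of the two cases is needed.
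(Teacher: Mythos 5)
Your proof is correct and follows exactly the route the paper indicates: the paper gives no detailed argument, merely noting that Proposition \ref{P1} is a corollary of the Cauchy inequality, and your Cauchy--Schwarz bound together with the extremal choice $f=\sum_{i=1}^N\overline{u_i(x)}\,u_i$ (plus the degenerate case) is precisely that standard argument.
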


 The following simple result can be found in \cite{DPTT}. Note that only the real case is discussed in \cite{DPTT}. However, the same argument works for the complex case as well.
 
 \begin{Proposition}[{\cite[Proposition 2.1]{DPTT}}]\label{P2} Let $Y_N:=\sp(u_1(x),\dots,u_N(x))$ with $\{u_i(x)\}_{i=1}^N$ being a real (or complex) orthonormal on $\Omega$ with respect to a probability measure $\mu$ basis for $Y_N$. Assume that $\|u_i\|_4:=\|u_i\|_{L_4(\Omega,\mu)} <\infty$ for all $i=1,\dots,N$.   Then for any $\de>0$ there exists
 a set $\Omega_M=\{x^j\}_{j=1}^M\subset \Omega$ such that for any $f\in Y_N$
 \begin{equation*}
| \|f\|_{L_2(\Omega,\mu)}^2 - \|f\|_{L_2(\Omega_M,\mu_M)}^2| \le \de \|f\|_{L_2(\Omega,\mu)}^2,
 \end{equation*}
 where
 $$
 \|f\|_{L_2(\Omega_M,\mu_M)}^2 := \frac{1}{M}\sum_{j=1}^M |f(x^j)|^2.
 $$
 \end{Proposition}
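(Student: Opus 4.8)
\textbf{Proof proposal for Proposition \ref{P2}.}

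The plan is to use the classical Marcinkiewicz–Zygmund-style (discretization) argument: first discretize the finitely many $L_2$- and $L_4$-type inner products, then transfer the norm equivalence to all of $Y_N$. I would begin by reducing the problem to controlling the Gram-type quantities
$$
a_{ik} := \int_\Omega u_i \bar u_k \, d\mu = \delta_{ik}, \qquad a_{ik}^{(M)} := \frac{1}{M}\sum_{j=1}^M u_i(x^j)\overline{u_k(x^j)},
$$
since for $f=\sum_i c_i u_i \in Y_N$ both $\|f\|_{L_2(\Omega,\mu)}^2$ and $\|f\|_{L_2(\Omega_M,\mu_M)}^2$ are Hermitian quadratic forms in $c=(c_1,\dots,c_N)$ with matrices $(a_{ik})$ and $(a_{ik}^{(M)})$ respectively. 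Thus it suffices to make the matrix $A_M := (a_{ik}^{(M)} - \delta_{ik})$ small in, say, operator (or Frobenius) norm, because then for every $f$ one has $|\,\|f\|_{L_2(\Omega,\mu)}^2 - \|f\|_{L_2(\Omega_M,\mu_M)}^2\,| = |c^* A_M c| \le \|A_M\| \|c\|_2^2 = \|A_M\| \|f\|_{L_2(\Omega,\mu)}^2$, the last equality by orthonormality of $\{u_i\}$. So the target inequality holds with the desired $\delta$ as soon as $\|A_M\| \le \delta$, and it is enough to bound the Frobenius norm, i.e. to make each of the $N^2$ entries $|a_{ik}^{(M)} - \delta_{ik}|$ smaller than $\delta/N$.

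The existence of such a point set $\{x^j\}_{j=1}^M$ I would obtain by a random (Monte Carlo) sampling / probabilistic argument: draw $x^1,\dots,x^M$ independently at random according to $\mu$. Then each $u_i(x^j)\overline{u_k(x^j)}$ has mean $\delta_{ik}$ (by orthonormality), so $a_{ik}^{(M)}$ is an average of $M$ i.i.d.\ random variables with the correct mean, and its variance is $O(1/M)$ provided the single-draw variance is finite. Here is exactly where the hypothesis $\|u_i\|_4 < \infty$ enters: by Cauchy–Schwarz, $\mathbb{E}\,|u_i(x^1)\overline{u_k(x^1)}|^2 = \int_\Omega |u_i|^2 |u_k|^2 \, d\mu \le \|u_i\|_4^2 \|u_k\|_4^2 < \infty$, so the second moments are all finite, and Chebyshev's inequality gives $\mathbb{P}\big(|a_{ik}^{(M)} - \delta_{ik}| > \delta/N\big) \le \frac{N^2 \|u_i\|_4^2 \|u_k\|_4^2}{\delta^2 M}$. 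A union bound over the $N^2$ pairs $(i,k)$ shows that for $M$ large enough (depending on $N$, $\delta$, and $\max_i \|u_i\|_4$) the probability that \emph{all} entries are within $\delta/N$ is positive; hence a good point set exists. (One could equally invoke the weak law of large numbers to say $a_{ik}^{(M)} \to \delta_{ik}$ in probability for each fixed pair, then a finite intersection of probability-one-in-the-limit events still has positive probability for large $M$; either route works.)

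The main obstacle — really the only place any care is needed — is the finiteness of the relevant second moments of the products $u_i u_k$, which is precisely what the $L_4$ assumption is designed to supply; without it the sample averages need not concentrate and the argument breaks. Everything else is routine: the reduction to the Gram matrix uses nothing but orthonormality and the fact that $Y_N$ is finite-dimensional, and the probabilistic step is a textbook Chebyshev-plus-union-bound estimate. I would remark, as the statement does implicitly, that no quantitative control on $M$ is claimed — $M$ may (and in general must) grow with $N$ and $1/\delta$ — so there is no need to optimize constants. Finally, I would note that the complex case is handled verbatim, replacing squares $f(x^j)^2$ by $|f(x^j)|^2$ and transposes by conjugate-transposes throughout, exactly as the remark preceding the proposition indicates.
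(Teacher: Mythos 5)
Your proposal is correct, but note that the paper itself gives no proof of Proposition \ref{P2}: it is quoted from \cite{DPTT} as a known result. Your argument --- reduce to the Gram entries $\frac1M\sum_j u_i(x^j)\overline{u_k(x^j)}-\delta_{ik}$, bound the quadratic form by the Frobenius norm, and produce the points by i.i.d.\ sampling from $\mu$ with a Chebyshev-plus-union-bound estimate, the $L_4$ assumption supplying the finite second moments of the products $u_i\bar u_k$ --- is precisely the standard probabilistic proof of that cited proposition, so it matches the intended argument and is complete.
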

 The following generalization of Theorem \ref{BT1}, which is equivalent to Theorem \ref{IT4}, is the main result of the paper. 
 \begin{Theorem}\label{BT4} Let  $\Omega\subset \R^d$ be a nonempty compact set with the probability measure $\mu$. Assume that $X_N\subset \mathcal C(\Omega)$ satisfies the Nikol'skii inequality (\ref{6}). 
Then there is an absolute  constant $C_1'$ such that there exists a set $\{\xi^j\}_{j=1}^m\subset \Omega$ of $m \le C_1' t^2 N$ points with the property:
 for any $f\in X_N $ we have  
\be\label{9}
C_2' \|f\|_2^2 \le \frac{1}{m}\sum_{j=1}^m |f(\xi^j)|^2 \le C_3' t^2\|f\|_2^2, 
\ee
where $C_2'$ and $C_3'$ are absolute positive constants. 
\end{Theorem}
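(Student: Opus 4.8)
\textbf{Proof plan for Theorem \ref{BT4}.}

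The plan is to reduce Theorem \ref{BT4} to Theorem \ref{BT1} by a standard two-step argument: first pass from the continuous measure space $(\Omega,\mu)$ to a suitable finite discrete set $\Omega_M$ on which an orthonormal basis of $X_N$ is almost orthonormal and satisfies a pointwise bound of the same quality as the Nikol'skii inequality, and then apply Theorem \ref{BT1} on $\Omega_M$. First I would fix an orthonormal basis $\{u_i\}_{i=1}^N$ of $X_N$ in $L_2(\Omega,\mu)$. By Proposition \ref{P1}, the Nikol'skii inequality \eqref{6} is equivalent to Condition E, i.e. $\sum_{i=1}^N|u_i(x)|^2\le Nt^2$ for all $x\in\Omega$; in particular each $u_i$ is bounded on $\Omega$, so $\|u_i\|_4<\infty$ and Proposition \ref{P2} applies. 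Applying Proposition \ref{P2} with a small $\delta$ (say $\delta=1/2$) yields a finite set $\Omega_M=\{x^j\}_{j=1}^M\subset\Omega$ such that $\tfrac12\|f\|_2^2\le \|f\|_{L_2(\Omega_M,\mu_M)}^2\le\tfrac32\|f\|_2^2$ for all $f\in X_N$, where $\mu_M$ is the normalized counting measure.

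The second step is to replace $\{u_i\}$ by a genuinely orthonormal basis of $X_N$ with respect to $\mu_M$. The Gram matrix $G$ of $\{u_i\}$ in $L_2(\Omega_M,\mu_M)$ is, by the previous inequality, spectrally comparable to the identity: $\tfrac12 I\preceq G\preceq \tfrac32 I$. Set $\tilde u := G^{-1/2}u$ (i.e. $\tilde u_i=\sum_k (G^{-1/2})_{ik}u_k$); then $\{\tilde u_i\}_{i=1}^N$ is an orthonormal basis of $X_N$ on $\Omega_M$, and since the eigenvalues of $G^{-1/2}$ lie in $[\sqrt{2/3},\sqrt2]$ we get the pointwise bound
\[
\sum_{i=1}^N|\tilde u_i(x^j)|^2 \le \|G^{-1/2}\|^2\sum_{i=1}^N|u_i(x^j)|^2 \le 2\,Nt^2 = N(\sqrt2\,t)^2
\]
for every $j$. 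Thus $\{\tilde u_i\}$ is orthonormal on $\Omega_M$ and satisfies \eqref{4} with $t$ replaced by $\sqrt2\,t$.

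Now Theorem \ref{BT1} applies to the system $\{\tilde u_i\}$ on $\Omega_M$: there is a subset $J\subset\{1,\dots,M\}$ with $m:=|J|\le C_1(\sqrt2 t)^2 N = 2C_1 t^2N$ and absolute constants $C_2,C_3$ such that $C_2\|f\|_{L_2(\Omega_M,\mu_M)}^2\le \tfrac1m\sum_{j\in J}|f(x^j)|^2\le C_3(\sqrt2 t)^2\|f\|_{L_2(\Omega_M,\mu_M)}^2$ for all $f\in X_N$ (here I use that the conclusion of Theorem \ref{BT1} is stated for $f$ in the span of the given orthonormal system, which is exactly $X_N$). Combining this with $\tfrac12\|f\|_2^2\le\|f\|_{L_2(\Omega_M,\mu_M)}^2\le\tfrac32\|f\|_2^2$ gives
\[
\tfrac12 C_2\|f\|_2^2 \le \frac1m\sum_{j\in J}|f(x^j)|^2 \le 3C_3 t^2\|f\|_2^2,\qquad \forall f\in X_N,
\]
with $m\le 2C_1 t^2 N$. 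Taking $\{\xi^j\}_{j=1}^m$ to be the points of $\Omega_M$ indexed by $J$, and setting $C_1'=2C_1$, $C_2'=C_2/2$, $C_3'=3C_3$, we obtain \eqref{9}. I expect the only genuinely delicate point to be making sure the hypotheses of Propositions \ref{P1}, \ref{P2} and of Theorem \ref{BT1} are invoked with the right system — in particular that one must first symmetrize/renormalize to get an exactly orthonormal system on $\Omega_M$ before quoting Theorem \ref{BT1}, and that this renormalization degrades the Nikol'skii constant only by a harmless absolute factor; everything else is bookkeeping of absolute constants.
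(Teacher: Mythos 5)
Your proposal is correct and follows essentially the same route as the paper: discretize $(\Omega,\mu)$ via Proposition \ref{P2} with $\delta=1/2$, pass to an exactly orthonormal basis on $\Omega_M$, and apply Theorem \ref{BT1}. The only difference is cosmetic — you make the re-orthonormalization explicit via $G^{-1/2}$ and track the resulting constant $\sqrt2\,t$, while the paper phrases the same step through the Nikol'skii inequality for the restricted subspace together with Proposition \ref{P1} (getting $2t$); both give the claimed absolute constants.
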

\begin{proof} For a given $\delta \in (0,1)$, taking into account Proposition \ref{P2}, we find a set $\Omega_M=\{x^j\}_{j=1}^M$ such that for any $f\in X_N$
 \be\label{10}
| \|f\|_{L_2(\Omega,\mu)}^2 - \|f\|_{L_2(\Omega_M,\mu_M)}^2| \le \de \|f\|_{L_2(\Omega,\mu)}^2.
\ee
Specify $\delta=1/2$.
Then, clearly, subspace $X_N$ restricted to $\Omega_M$ (denote it by $Y_l$)  satisfies the Nikol'skii inequality (\ref{6}) with $t$ replaced by $2t$. Let $u_1,\dots, u_ l$, $l\le N$, be an orthonormal basis of $Y_l$.  By Proposition \ref{P1} inequality (\ref{6}) is equivalent to (\ref{4}). 
Now applying Theorem \ref{BT1} to $Y_l$ we find a subset $J\subset \{1,2,\dots,M\}$ with the property:  $m:=|J| \le C_1(2t)^2 N$ and
 for any $f\in X_N$ we have  
$$
C_2 \|f\|_{L_2(\Omega_M,\mu_M)}^2 \le \frac{1}{m}\sum_{j\in J} |f(x^j)|^2 \le C_3 t^2\|f\|_{L_2(\Omega_M,\mu_M)}^2, 
$$
where $C_2$ and $C_3$ are absolute positive constants from Theorem \ref{BT1}. From here and (\ref{10}) with $\delta=1/2$ we obtain (\ref{9}).
\end{proof}

\begin{Remark}\label{BR2} In Theorem \ref{BT4} we assume that $X_N\subset \mathcal C(\Omega)$. It is done for convenience. The statement of Theorem \ref{BT4} holds if instead of 
continuity assumption we require that $X_N$ is a subspace of the space $\mathcal B(\Omega,\mu)$ of functions, which are bounded and measurable with respect to $\mu$ on $\Omega$, {where $\Omega$ is a nonempty subset of $\R^d$}. 
\end{Remark}

{\bf Acknowledgements.} The authors are grateful to Boris Kashin for very useful comments, in particular, for bringing the paper \cite{NOU} to their attention. 

The work was supported by the Russian Science Foundation (project no. 21-11-00131) at Lomonosov Moscow State University  (Sections 1,2) and by the Russian Federation Government Grant N{\textsuperscript{\underline{o}}}14.W03.31.0031 (Section 3)


\begin{thebibliography}{9999}
 
 \bibitem{BSS} J. Batson, D.A. Spielman, and N. Srivastava, Twice-Ramanujan Sparsifiers,   {\it SIAM Review}, {\bf 56} (2014), 315--334.
 
 \bibitem{BLM} J. Bourgain, J. Lindenstrauss and V. Milman, Approximation of zonoids by zonotopes,   {\it Acta Math.}, {\bf 162} (1989), 73--141.
 
 \bibitem{DPTT} F. Dai, A. Prymak, V.N. Temlyakov, and  S.U. Tikhonov, Integral norm discretization and related problems,
  {\it Russian Math. Surveys} {\bf 74:4} (2019),   579--630.
 Translation from
{\it Uspekhi Mat. Nauk}  {\bf 74:4(448)}  (2019),	3--58; arXiv:1807.01353v1.

\bibitem{DPSTT1} F. Dai, A. Prymak, A. Shadrin, V. Temlyakov, S. Tikhonov,  Sampling Discretization of Integral Norms, {\it Constr. Approx.} (2021); https://doi.org/10.1007/s00365-021-09539-0;
arXiv:2001.09320v1 [math.CA] 25 Jan 2020.

 \bibitem {DPSTT2} F. Dai, A. Prymak, A. Shadrin, V. Temlyakov, and S. Tikhonov, Entropy numbers and Marcinkiewicz-type discretization theorem, arXiv:2001.10636v1 [math.CA] 28 Jan 2020.
 
 \bibitem{HO} N.J. Harvey and N. Olver, Pipage rounding, pessimistic estimators and matrix concentration, {\it Proc. of the Twenty-Fifth Annual ACM-SIAM Symposium on Discrete Algorithms} (2014), 926--945. 
 
 \bibitem{JS} W.B. Johnson and G. Schechtman, Finite dimensional subspaces of $L_p$,
 	Handbook of the geometry of Banach spaces, Vol. 1 (2001), 837--870, North-Holland, Amsterdam.
	
\bibitem{Ka} B.S. Kashin, Lunin's method for selecting large submatrices with small norm, {\it Sb. Math.}, {\bf 206:7} (2015), 980--987. 	
\bibitem{KL}  B.S. Kashin, I.V. Limonova, Decomposing a Matrix into two Submatrices with Extremally Small $(2,1)$--Norm, Math. Notes, {\bf 106}:1 (2019), 63--70. Translation from {\it Mat. Zametki}, {\bf 106:1} (2019), 53--61.

\bibitem{VT168} B.S. Kashin and V.N. Temlyakov, Observations on discretization of trigonometric polynomials with given spectrum, {\it Russian Math. Surveys}, {\bf 73:6} (2018), 1128--1130. Translation from {\it Uspekhi Mat. Nauk} {\bf 73:6} (2018) 197--198.
 
\bibitem{Kos} E. Kosov, Marcinkiewicz-type discretization
of $L^p$-norms under the Nikolskii-type inequality assumption, arXiv:2005.01674v1 [math.FA] 4 May 2020.

\bibitem{L20} I.V. Limonova, Decomposing a Matrix into Two Submatrices with Extremely Small Operator Norm, {\it Math. Notes}, {\bf 108:1} (2020), 137–-141. Translation from {\it Mat. Zametki}, {\bf 108:1} (2020), 153--157; arXiv:2003.00979v1. 

\bibitem{LT} I. Limonova and V. Temlyakov, On sampling discretization in $L_2$, arXiv:2009.10789v1 [math.FA] 22 Sep 2020.

\bibitem{Lun}   A. A. Lunin, Operator norms of submatrices, {\it Math. Notes}, {\bf 45:3} (1989), 248--252. Translation from {\it Mat. Zametki}, {\bf 45:3} (1989) 94--100.
 
\bibitem{MSS} A. Marcus, D.A. Spielman, and N. Srivastava,
Interlacing families II: Mixed characteristic polynomials and the Kadison-Singer problem,  {\it Annals of Math.}, {\bf 182:1} (2015), 327--350.

\bibitem{NSU} N. Nagel, M. Sch{\"a}fer, T. Ullrich, A new upper bound for sampling numbers,
arXiv:2010.00327v1 [math.NA] 30 Sep 2020. 	

\bibitem{NOU} S. Nitzan, A. Olevskii, and A. Ulanovskii,
	{Exponential frames on unbounded sets},  {\it Proc. Amer. Math. Soc.}, {\bf 144:1} (2016), 109--118.
	 
\bibitem{OU} A.M. Olevskii and A. Ulanovskii, Functions with Disconnected Spectrum: Sampling, Interpolation, Translates,   Amer. Math. Soc., University Lecture Series, {\bf 65}, Providence, Rhode Island, 2016.
 	
\bibitem{Rud} M. Rudelson, Almost orthogonal submatrices of an orthogonal matrix,   {\it Israel J. Math.}, {\bf 111} (1999), 143--155.

\bibitem{Sche3} G. Schechtman, More on embedding subspaces of $L_p$ in $\ell^n_r$,
{\it Compositio Math.}, {\bf 61:2} (1987), 159--169.

\bibitem{VT158} V.N. Temlyakov, The Marcinkiewicz-type discretization theorems for the hyperbolic cross polynomials,   {\it Jaen  Journal on Approximation}, {\bf 9} (2017), No. 1, 37--63; arXiv: 1702.01617v2.

\bibitem{VT159} V.N. Temlyakov, The Marcinkiewicz-Type Discretization Theorems, {\it Constructive Approximation}, {\bf 48} (2018), 337--369. 

\bibitem{VT183} V.N. Temlyakov, On optimal recovery in $L_2$, J. Complexity (2020), Available online 19 December 2020, 101545;
doi: https://doi.org/10.1016/j.jco.2020.101545; arXiv:2010.03103v1 [math.NA] 7 Oct 2020.

\bibitem{TU1} V.N. Temlyakov and T. Ullrich,  Bounds on Kolmogorov widths and sampling recovery for classes with small mixed smoothness, arXiv:2012.09925v1 [math.NA] 17 Dec 2020.
 
 \bibitem{Z} A. Zygmund, Trigonometric Series, Cambridge University Press, 1959.
		
	
 \end{thebibliography}
\end{document}